\begin{document}

\newtheorem{thm}{Theorem}[section]
\newtheorem{lem}[thm]{Lemma}
\newtheorem{prop}[thm]{Proposition}
\newtheorem{cor}[thm]{Corollary}
\newtheorem{con}[thm]{Conjecture}
\newtheorem{claim}[thm]{Claim}
\newtheorem{obs}[thm]{Observation}
\newtheorem{ques}[thm]{Question}
\newtheorem{definition}[thm]{Definition}
\newtheorem{example}[thm]{Example}
\newtheorem{rmk}[thm]{Remark}
\newcommand{\di}{\displaystyle}
\def\dfc{\mathrm{def}}
\def\cF{{\cal F}}
\def\cH{{\cal H}}
\def\cT{{\cal T}}
\def\C{{\mathcal C}}
\def\cA{{\cal A}}
\def\cB{{\mathcal B}}
\def\P{{\mathcal P}}
\def\Q{{\mathcal Q}}
\def\cP{{\mathcal P}}
\def\cp{\alpha'}
\def\Frk{F_k^{2r+1}}

\newcommand{\F}{\mathcal{F}}

\title{Planar graphs with girth at least 5 are $(3,4)$-colorable}

\author{Ilkyoo Choi$^1$ \and  Gexin Yu$^2$ \and Xia Zhang$^{3,*}$}

\address{$^1$
Department of Mathematics, Hankuk University of Foreign Studies, Yongin-si, Gyeonggi-do, Republic of Korea, ilkyoo@hufs.ac.kr.}

\address{$^2$Department of Mathematics\\
The College of William and Mary\\
Williamsburg, VA, 23185, gyu@wm.edu.}

\address{$^3$School of Mathematics and Statistics\\
Shangdong Normal University\\
Jinan, China, xiazhang@sdnu.edu.cn, pandarhz@sina.com
}

\thanks{$*$Corresponding author. \\
The first author's research was supported by the Basic Science Research Program through the National Research Foundation of Korea (NRF) funded by the Ministry of Education (NRF-2018R1D1A1B07043049), and also by the Hankuk University of Foreign Studies Research Fund. The research of the second author is supported in part by the NSA grant  H98230-16-1-0316 and the Natural Science Foundation of China (11728102). The last author's research is supported by the Shandong Provincial Natural Science Foundation (No. ZR2019MA032, ZR2014JL001, ZR2016AQ01), the National Natural Science Foundation of China (No.11701342) and the Excellent Young Scholars Research Fund of Shandong Normal University of China.}


\maketitle

\begin{abstract}
A graph is $(d_1, \ldots, d_k)$-colorable if its vertex set can be partitioned into $k$ nonempty subsets so that the subgraph induced by the $i$th part has maximum degree at most $d_i$ for each $i\in\{1, \ldots, k\}$.
It is known that for each pair $(d_1, d_2)$, there exists a planar graph with girth $4$ that is not $(d_1, d_2)$-colorable.
This sparked the interest in finding the pairs $(d_1, d_2)$ such that planar graphs with girth at least $5$ are $(d_1, d_2)$-colorable.
Given $d_1\leq d_2$, it is known that planar graphs with girth at least $5$ are $(d_1, d_2)$-colorable if either $d_1\geq 2$ and $d_1+d_2\geq 8$ or $d_1=1$ and $d_2\geq 10$.
We improve an aforementioned result by providing the first pair $(d_1, d_2)$ in the literature satisfying $d_1+d_2\leq 7$ where planar graphs with girth at least $5$ are $(d_1, d_2)$-colorable.
Namely, we prove that planar graphs with girth at least $5$ are $(3, 4)$-colorable.
\end{abstract}

{\it Keywords:} {Improper coloring; planar graph; discharging method}

\section{Introduction}

All graphs in this paper are finite and simple, which means no loops and no multiple edges.
For an integer $k$, let $[k]=\{1, \ldots, k\}$.
Given a graph $G$, let $V(G)$ and $E(G)$ denote its vertex set and edge set, respectively.
A graph is {\it $(d_1, \ldots, d_k)$-colorable} if its vertex set can be partitioned into $k$ nonempty subsets so that the subgraph induced by the $i$th part has maximum degree at most $d_i$ for each $i\in [k]$.
This notion is known as {\it improper coloring}, or {\it defective coloring}, and has recently attracted much attention.
Improper coloring is a relaxation of the traditional proper coloring, however, it also opens up an opportunity to gain refined information on partitioning the graph compared to the traditional proper coloring.

The Four Color Theorem~\cite{1977ApHa,1977ApHaKo} states that the vertex set of a planar graph can be partitioned into four independent sets; this means that every planar graph is $(0, 0, 0, 0)$-colorable since an independent set induces a graph with maximum degree at most $0$.
A natural question to ask is what happens when we try to partition the vertex set of a planar graph into fewer parts.
Already in 1986, Cowen, Cowen, and Woodall~\cite{1986CoCoWo} proved that a planar graph is $(2, 2, 2)$-colorable.
The previous result is sharp since Eaton and Hull~\cite{1999EaHu} and independently \v Skrekovski~\cite{1999Sk} both acknowledged the existence of a planar graph that is not $(1, h, l)$-colorable for any given $h$ and $l$; for an explicit construction see~\cite{unpub_ChEs}.
Hence, improper coloring of a planar graph with no restriction is completely solved for $k\geq 3$.

Since sparser graphs are easier to color, a natural direction of research is to consider sparse planar graphs, and a popular sparsity condition is imposing a restriction on girth.
Gr\"otzsch's theorem~\cite{1959Gr} states that a planar graph with girth at least $4$ is $(0, 0, 0)$-colorable.
Therefore it only remains to consider partitioning the vertex set of a planar graph into two parts.
Moreover, since there exists a planar graph with girth $4$ that is not $(d_1, d_2)$-colorable for each pair $(d_1, d_2)$ (see~\cite{2015MoOc} for an explicit construction), there has been a considerable amount of research towards improper coloring planar graphs with girth at least $5$.
For various results regarding improper coloring planar graphs with girth at least $6$ or other sparse graphs that are not necessarily planar, see~\cite{2010BoIvMoOcRa,2013BoKoYa,2011BoKo,2014BoKo,2006HaSe,2015MoOc}.
Similar research has also been done for graphs on surfaces as well~\cite{unpub_ChEs}.

In this paper, we focus on planar graphs with girth at least $5$.
\v Skrekovski~\cite{2000Sk} showed that planar graphs with girth at least $5$ are $(4, 4)$-colorable and Borodin and Kostochka~\cite{2014BoKo} proved a result that implies planar graphs with girth at least $5$ are $(2, 6)$-colorable.
Answering a question by Raspaud, Choi and Raspaud~\cite{2015ChRa} proved that planar graphs with girth at least $5$ are $(3, 5)$-colorable.
Recently, Choi et al.~\cite{2017ChChJeSu} proved that planar graphs with girth at least $5$ are $(1, 10)$-colorable, which answered a question by Montassier and Ochem~\cite{2015MoOc} in the affirmative.
By a construction of Borodin et al.~\cite{2010BoIvMoOcRa}, it is also known that planar graphs with girth at least $5$ (even $6$) are not necessarily $(0, d)$-colorable for an arbitrary $d$.
As a conclusion, there are only finitely many pairs $(d_1, d_2)$ that are unknown for which planar graphs with girth at least $5$ are $(d_1, d_2)$-colorable.
To sum up, all previous knowledge about improper coloring planar graphs with girth at least $5$ is the following:

\begin{thm}
Given $d_1\leq d_2$, planar graphs with girth at least $5$ are $(d_1, d_2)$-colorable if
\begin{enumerate}
\item $d_1\geq 2$ and $d_1+d_2\geq 8$~\cite{2014BoKo,2015ChRa,2000Sk} or
\item $d_1=1$ and $d_2\geq 10$~\cite{2017ChChJeSu}.
\end{enumerate}
\end{thm}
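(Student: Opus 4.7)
The plan is to observe that the property of being $(d_1,d_2)$-colorable is monotone in each coordinate and then reduce the theorem to a small finite list of base cases, each of which is established in one of the cited papers.

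First I would record the monotonicity principle: if a partition $(V_1,V_2)$ of $V(G)$ witnesses $(d_1,d_2)$-colorability, then the very same partition witnesses $(d_1',d_2')$-colorability whenever $d_1'\ge d_1$ and $d_2'\ge d_2$, since raising the permitted degree bounds cannot invalidate a partition that was already valid. Hence, to prove the theorem, it suffices to verify $(d_1,d_2)$-colorability for the coordinatewise minimal pairs in each of the two stated regions.

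For item (1), the coordinatewise minimal pairs in $\{(d_1,d_2):d_1\le d_2,\ d_1\ge 2,\ d_1+d_2\ge 8\}$ are $(2,6)$, $(3,5)$, and $(4,4)$. The pair $(4,4)$ is \v Skrekovski's theorem from \cite{2000Sk}; the pair $(3,5)$ is due to Choi and Raspaud in \cite{2015ChRa}; and the pair $(2,6)$ follows from the result of Borodin and Kostochka in \cite{2014BoKo}. The remaining pairs in the region are handled by monotonicity: if $d_1=2$ then $d_2\ge 6$ and we dominate $(2,6)$; if $d_1=3$ then $d_2\ge 5$ and we dominate $(3,5)$; if $d_1\ge 4$ then $d_2\ge d_1\ge 4$ and we dominate $(4,4)$. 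For item (2), the base case $(1,10)$ is the main theorem of Choi, Choi, Jeong, and Suh in \cite{2017ChChJeSu}, and any $(1,d_2)$ with $d_2\ge 10$ dominates $(1,10)$, so monotonicity finishes the argument.

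The main obstacle in this theorem is not in its proof but in its ingredients: each base case $(4,4)$, $(3,5)$, $(2,6)$, and $(1,10)$ is itself a substantial discharging argument in the cited literature. Assuming those results, the only care needed here is the elementary verification that the two stated inequalities $d_1+d_2\ge 8$ (with $d_1\ge 2$) and $d_2\ge 10$ (with $d_1=1$) are precisely matched by coordinatewise domination from the four base pairs listed above, which is the short case check carried out in the previous paragraph.
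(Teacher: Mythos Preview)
Your argument is correct. The paper itself does not give a proof of this statement at all: it is presented as a summary of prior literature, with the citations \cite{2014BoKo,2015ChRa,2000Sk,2017ChChJeSu} carrying the full burden. What you have done is make explicit the trivial monotonicity reduction that the paper leaves implicit---namely, that the regions in items (1) and (2) are exactly the upward closures (under $d_1\le d_2$) of the base pairs $(2,6)$, $(3,5)$, $(4,4)$, and $(1,10)$ established in those references. So your approach and the paper's are the same in substance; you have simply spelled out the one-line reduction.
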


In this paper, we prove the following theorem, which reveals the first pair $(d_1, d_2)$ satisfying $d_1+d_2\leq 7$ where planar graphs with girth at least $5$ are $(d_1, d_2)$-colorable.

\begin{thm}\label{thm:main}
Planar graphs with girth at least $5$ are $(3,4)$-colorable.
\end{thm}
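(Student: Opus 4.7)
The plan is to argue by contradiction via the discharging method, following the template established by \v Skrekovski and by Choi--Raspaud but pushed further to accommodate the tighter color-degree caps $(3,4)$ versus $(3,5)$. Let $G$ be a counterexample to Theorem~\ref{thm:main} that minimizes $|V(G)|+|E(G)|$; thus $G$ is a plane graph with girth at least $5$ admitting no $(3,4)$-coloring, while every proper subgraph does. Standard minimality reasoning rules out pendant and isolated vertices and forces $G$ to be $2$-connected, so $\delta(G)\geq 2$.

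The first main block is to establish a list of \emph{reducible configurations}. The underlying extension principle is: given a $(3,4)$-coloring $\phi$ of a subgraph $H\subseteq G$, an uncolored vertex $v$ can be colored $1$ if it has at most $3$ color-$1$ neighbors in $\phi$, each of which has at most $2$ further color-$1$ neighbors; similarly for color $2$ with thresholds $4$ and $3$. I would delete or peel off small substructures---for instance, a $2$-vertex lying on a short face, a $3$-vertex adjacent to three $3$-vertices, or a $5$-face carrying many low-degree vertices---and exhaust the possible colorings of the remaining boundary. Typical outputs of this analysis are statements like ``no two $2$-vertices share a neighbor,'' ``each $5$-face has at most one $2$-vertex,'' and restrictions on chains of $3$-vertices along faces.

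The second main block is the discharging. I would assign initial charge $\mu(v)=d(v)-4$ to each vertex and $\mu(f)=\ell(f)-4$ to each face; by Euler's formula and $\sum_f \ell(f)=2|E(G)|$, the total charge equals $-8$. Since girth is at least $5$, every face has non-negative initial charge, so all deficit sits on $2$-vertices (charge $-2$) and $3$-vertices (charge $-1$). I would then design discharging rules sending charge from $\geq 5$-faces and from $\geq 4$-vertices toward the deficient $2$- and $3$-vertices, with refinements when a $3$-vertex is ``rich'' (has a $\geq 5$-neighbor) or ``poor'' (all neighbors of degree $3$ or $4$). The goal is that every element finishes with non-negative charge, contradicting the total $-8<0$.

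The hard part is the reducibility analysis in Step~1. Because color class $2$ has slack only up to degree $4$ rather than $5$, chains of low-degree vertices along a face can force all of them into the same class and thereby exceed the degree cap on some internal vertex; ruling this out requires working out boundary-coloring cases much more carefully than in the $(3,5)$ setting. A secondary obstacle is calibrating the discharging so that $5$-faces (which begin with charge $+1$) are never overdrawn when they simultaneously host a $2$-vertex and several $3$-vertices; this typically demands splitting the charge a $2$-vertex receives between its two neighbors and its incident faces, and introducing case-dependent rules for $3$-vertices based on the degree profile around them.
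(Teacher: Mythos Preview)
Your high-level framework---minimal counterexample, reducible configurations, and discharging with $\mu(x)=d(x)-4$ summing to $-8$---matches the paper, but the detailed structure you anticipate is off in ways that matter.

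The central structural fact you are missing is that the minimum counterexample has \emph{no $3$-vertices at all}. The paper achieves this by ordering counterexamples first by the number of $3^+$-vertices and only then by $|V|+|E|$; under this ordering one can replace a $3$-vertex $v$ by three new degree-$2$ paths between its neighbors (preserving planarity and girth), obtain a smaller instance, and color it, then transfer a majority color among the new $2$-vertices back to $v$. With $3$-vertices gone, all of your discussion of rich/poor $3$-vertices, chains of $3$-vertices along faces, and charge routed from $4^+$-vertices to $3$-vertices is aimed at a difficulty that does not exist. (Also, $4$-vertices have charge $0$ and play no active role.)

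The actual deficit is carried not only by $2$-vertices but by $5$- and $6$-vertices that have almost all $2$-neighbors: the paper's ``$5p$'', ``$5s$'', and ``$6p$'' vertices. These start with positive charge but must pay $\tfrac12$ to each $2$-neighbor and end up short; the bulk of the work is a fine case analysis of faces containing such vertices, together with a nonstandard device---\emph{heavy edges}---that receive charge from their rich endpoints and redistribute it to the two incident faces. Your proposed reducible configuration ``each $5$-face has at most one $2$-vertex'' is in fact false here: $5$-faces with two $2$-vertices survive and must be handled in the discharging. In short, the skeleton is right, but the proof hinges on eliminating $3$-vertices via a tailored minimality ordering and then controlling poor $5$- and $6$-vertices, not on the $3$-vertex machinery you sketch.
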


The above theorem also improves the best known answer to the following question, which was explicitly stated in~\cite{2015ChRa}:

\begin{ques}[\cite{2015ChRa}]
What is the minimum $d^3_2$ such that planar graphs with girth at least $5$ are $(3, d^3_2)$-colorable?
\end{ques}

Since Montassier and Ochem~\cite{2015MoOc} constructed a planar graph with girth $5$ that is not $(3, 1)$-colorable, along with Theorem~\ref{thm:main}, this shows that $d^3_2\in\{2, 3, 4\}$.
Theorem~\ref{thm:main} is an improvement to the previously best known bound, which was by Choi and Raspaud~\cite{2015ChRa}.
It would be remarkable to determine the exact value of $d^3_2$.

Section 2 will reveal some structural properties of a minimum counterexample to Theorem~\ref{thm:main}.
In Section 3, we will show that a minimum counterexample to Theorem~\ref{thm:main} cannot exist via discharging, hence proving the theorem.

We end the introduction with some definitions that will be used throughout the paper.
A {\it $d$-vertex}, a {\it $d^-$-vertex}, and a {\it $d^+$-vertex} is a vertex of degree $d$, at most $d$, and at least $d$, respectively.
A {\it $d$-neighbor} of a vertex is a neighbor that is a $d$-vertex.
A $d$-vertex is a {\em poor $d$-vertex} (or {\it $dp$-vertex}) and a {\em semi-poor $d$-vertex} (or {\it $ds$-vertex}) if it has exactly one and two, respectively, $3^+$-neighbors; otherwise, it is called a {\em rich vertex} (or {\em $dr$-vertex}).
A {\em $dr^+$-vertex} is a rich $d^+$-vertex.  A $ds^+$-$vertex$ is a $d^+$-vertex with at least two $3^+$-neighbors. A $dp^-$-$vertex$ is a $d^-$-vertex with at most one $3^+$-neighbor.
An edge $uv$ is a {\em heavy edge} if both $u$ and $v$ are $5^+$-vertices, and neither $u$ nor $v$ is a $5p$-, $5s$-, or $6p$-vertex.

Throughout the paper, let $G$ be a counterexample to Theorem~\ref{thm:main} with the minimum number of $3^+$-vertices, and subject to that choose one with the minimum number of $|V|+|E|$.
It is easy to see that $G$ must be connected and there are no $1$-vertices in $G$.
From now on, given a (partially) $(3, 4)$-colored graph, let $i$ be the color of the color class where maximum degree $i$ is allowed for $i\in\{3, 4\}$.
We say a vertex with color $i$ is {\it $i$-saturated} if it already has $i$ neighbors of the same color.
A vertex is {\it saturated} if it is either $3$-saturated or $4$-saturated.

\section{Structural Lemmas}

In this section, we reveal useful structural properties of $G$.

\begin{lem}\label{lem:edge}
Every edge $xy$ of $G$ has an endpoint with degree at least $5$.
\end{lem}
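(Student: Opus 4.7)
The plan is to argue by contradiction using the minimality of $G$. Suppose $G$ has an edge $xy$ with $d(x), d(y) \le 4$, and set $G' = G - xy$. Since $G'$ has strictly fewer edges than $G$ and no more $3^+$-vertices (removing an edge can only decrease degrees), the minimality of $G$ guarantees that every component of $G'$ is $(3,4)$-colorable; combining these colorings gives a $(3,4)$-coloring $\phi$ of $G'$. The goal is to extend $\phi$, after at most one local recoloring, to a valid $(3,4)$-coloring of $G$, producing the desired contradiction.

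I would split the analysis according to the pair $(\phi(x), \phi(y))$. When $\phi(x) \ne \phi(y)$ the coloring $\phi$ is already valid on $G$. When $\phi(x) = \phi(y) = 4$, the bound $d_{G'}(x) \le d(x)-1 \le 3$ gives $x$ at most three color-$4$ neighbors in $G'$, hence at most four in $G$; symmetrically for $y$, so both endpoints fit within the color-$4$ budget. When $\phi(x) = \phi(y) = 3$ and both $x$ and $y$ have at most two color-$3$ neighbors in $G'$, each acquires at most three in $G$ and $\phi$ extends directly. The only remaining subcase is when one endpoint, say $x$, has exactly three color-$3$ neighbors in $G'$: this forces $d_{G'}(x) = 3$ and hence $d(x) = 4$, and moreover forces every $G'$-neighbor of $x$ to carry color $3$. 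In this subcase I would recolor $x$ to color $4$.

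The one delicate step is verifying that this recoloring does not propagate into a fresh violation elsewhere. The key observation is that every $G'$-neighbor of $x$ already has color $3$, and $y$ also remains color $3$; so after the swap $x$ has no color-$4$ neighbors in $G$ at all, well within the color-$4$ maximum-degree budget of $4$, and no other vertex gains a new color-$4$ neighbor (since $x$'s neighbors are all color $3$). The color-$3$ constraint at every remaining color-$3$ vertex can only relax, as $x$ has left the color-$3$ class. Finally, the edge $xy$ now joins a color-$4$ vertex to a color-$3$ vertex, so it creates no conflict, and we obtain a valid $(3,4)$-coloring of $G$, contradicting the choice of $G$ as a counterexample. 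The main potential pitfall is precisely this recoloring step, but the structural observation that all of $x$'s $G'$-neighbors must already be color $3$ (since $x$ was color-$3$-saturated in $G'$) renders it automatic.
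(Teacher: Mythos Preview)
Your proof is correct and follows essentially the same approach as the paper's: delete $xy$, invoke minimality to get a $(3,4)$-coloring of $G-xy$, and in the only problematic case ($\varphi(x)=\varphi(y)=3$ with one endpoint $3$-saturated in $G-xy$) recolor that endpoint with $4$, using the observation that all of its neighbors in $G$ are colored $3$. Your write-up is simply a more explicit version of the paper's terse argument.
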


\begin{proof}
Suppose to the contrary that $x$ and $y$ are both $4^-$-vertices. Since $G-xy$ is a graph with fewer edges than $G$ and the number of $3^+$-vertices did not increase, there is a $(3, 4)$-coloring $\varphi$ of $G-xy$. If either $\varphi(x)\neq\varphi(y)$ or $\varphi(x)=\varphi(y)=4$, then $\varphi$ is also a $(3, 4)$-coloring of $G$. Otherwise, $\varphi(x)=\varphi(y)=3$, and at least one of $x, y$ is $3$-saturated in $G-xy$.  For one $3$-saturated vertex in $\{x, y\}$, we may recolor it with the color $4$, since all of its neighbors have color $3$ in $G$.  In all cases we end up with a $(3,4)$-coloring of $G$, which is a contradiction.
\end{proof}

\begin{lem}\label{lem:3vx}
There is no $3$-vertex in $G$.
\end{lem}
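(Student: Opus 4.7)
The plan is by contradiction: suppose $v$ is a $3$-vertex in $G$ with neighbors $x_1, x_2, x_3$. By Lemma~\ref{lem:edge} each $x_i$ is a $5^+$-vertex, and because $G$ has girth at least $5$ the vertices $x_1, x_2, x_3$ are pairwise non-adjacent and share no common neighbor other than $v$. First I would form $G' = G - v$; since $v$ is itself a $3^+$-vertex, $G'$ has strictly fewer $3^+$-vertices than $G$, so the minimality of $G$ yields a $(3,4)$-coloring $\varphi$ of $G'$.

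The goal is to extend $\varphi$ to a $(3,4)$-coloring of $G$ and derive a contradiction. Assigning color $c\in\{3,4\}$ to $v$ is valid unless some color-$c$ neighbor of $v$ is $c$-saturated in $G'$; if at least one color is unblocked, the extension is immediate. Otherwise some neighbor, say $x_1$, is $3$-saturated with color $3$, and some neighbor, say $x_2$, is $4$-saturated with color $4$. I would then perform a targeted swap: recolor $x_2$ from $4$ to $3$ (so that $v$'s blockers in color $4$ are reduced) and then assign $v$ color $4$, or symmetrically recolor $x_1$ from $3$ to $4$ and assign $v$ color $3$. Since $x_2$ is $4$-saturated with degree at least $5$, it has exactly four color-$4$ neighbors and $d(x_2)-5$ color-$3$ neighbors in $G'$, so this swap is legal provided $d(x_2)\leq 8$ and no color-$3$ neighbor of $x_2$ is itself $3$-saturated. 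The analogous conditions control the swap on $x_1$. A successful swap delivers a $(3,4)$-coloring of $G$, contradicting the minimality of $G$. A small case split on the color distribution of $(x_1,x_2,x_3)$, which must be either $\{3,3,4\}$ or $\{3,4,4\}$ in the blocked situation, tells me which swap to attempt.

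The main obstacle is showing that at least one such swap is always achievable. When the direct swap on $x_2$ is blocked by a $3$-saturated neighbor $u$ of $x_2$, I would iterate the argument on $u$, noting that $u$ is again a $5^+$-vertex by Lemma~\ref{lem:edge}; similarly on the $x_1$ side. The plan is to use planarity, the girth-$5$ condition — which keeps the relevant saturated neighborhoods disjoint — and the degree lower bound to show that such a chain of forced saturated vertices cannot continue indefinitely: a safe swap can be performed at the end of the chain and propagated back to free a color at $v$. The delicate bookkeeping required to handle all positions of the chain, together with the boundary cases where $d(x_1)$ or $d(x_2)$ approaches or exceeds the threshold $8$, is the most technical step of the proof.
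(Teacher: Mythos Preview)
Your proposal has a genuine gap at precisely the point you flag as ``the most technical step'': the recoloring-chain argument is never actually carried out, and there is no reason to believe it terminates. When the swap at $x_2$ is blocked by a $3$-saturated neighbour $u$, you claim $u$ is a $5^+$-vertex ``by Lemma~\ref{lem:edge}'', but that lemma only guarantees that the edge $x_2u$ has \emph{some} endpoint of degree $\geq 5$, and that endpoint may well be $x_2$; nothing prevents $u$ from being a $4$-vertex. More importantly, even if you could control the degrees along the chain, neither planarity nor girth $5$ gives a finiteness or acyclicity argument for a Kempe-style alternating chain in the \emph{defective} setting: a $3$-saturated vertex can have many $4$-saturated neighbours, each with further $3$-saturated neighbours, and so on, and you have offered no potential function or structural bound that forces the process to reach a safe swap. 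As written, the proposal is a heuristic outline whose hard part is entirely missing.

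The paper avoids this difficulty by a completely different and much shorter device. Instead of colouring $G-v$ and then fighting to free a colour at $v$, one builds an auxiliary graph $H$ from $G-v$ by adding three new $2$-vertices $u_1,u_2,u_3$ and the length-two paths $v_1u_1v_2$, $v_2u_2v_3$, $v_3u_3v_1$. This $H$ is still planar with girth at least $5$, has the same number of $3^+$-vertices minus one, and each $v_i$ has \emph{higher} degree in $H$ than in $G$. By minimality $H$ admits a $(3,4)$-colouring $\varphi$; two of $u_1,u_2,u_3$ share a colour $c$, and then every $v_i$ already has a colour-$c$ neighbour among the $u_j$'s. Replacing those neighbours by the single edge $v v_i$ and setting $\varphi(v)=c$ can only decrease the number of colour-$c$ neighbours of each $v_i$, so the colouring transfers to $G$. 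The gadget thus forces the colouring of the smaller graph to reserve a slot for $v$ in advance, eliminating the need for any recolouring chain.
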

\begin{proof}
Suppose to the contrary that $v$ is a $3$-vertex of $G$ with neighbors $v_1, v_2, v_3$. By Lemma~\ref{lem:edge}, we know that $v_1, v_2, v_3$ are $5^+$-vertices. Obtain a graph $H$ from $G-v$ by adding paths $v_1u_1v_2, v_2u_2v_3, v_3u_3v_1$ of length two between the neighbors of $v$. See Figure~\ref{fig:3vx} for an illustration. Note that $H$ is planar and still has girth at least $5$ since the pairwise distance between $v_1, v_2, v_3$ did not change.  Since $H$ has fewer $3^+$-vertices than $G$, there is a $(3, 4)$-coloring $\varphi$ of $H$.

\begin{figure}[ht]
\begin{center}
\includegraphics[scale=0.7]{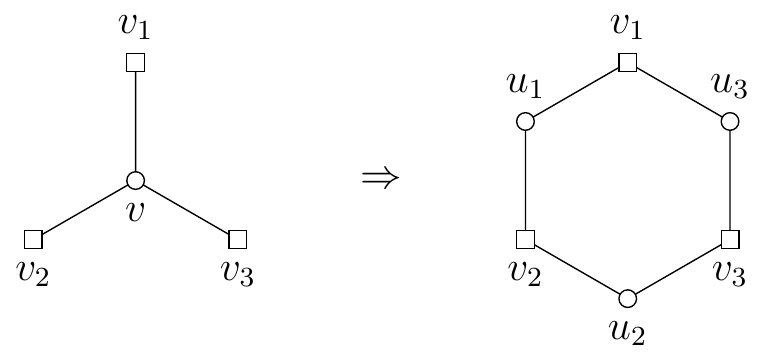}
\caption{Obtaining $H$ from $G$ in Lemma~\ref{lem:3vx}.}
\label{fig:3vx}
\end{center}
\end{figure}

Without loss of generality, we may assume $\varphi(u_1)=\varphi(u_2)$.
Since each of $v_1, v_2, v_3$ has a neighbor in $\{u_1, u_2\}$, using the color $\varphi(u_1)$ on $v$ gives a $(3, 4)$-coloring of $G$, which is a contradiction.
\end{proof}

\begin{lem}\label{vertex-degree}
If $v$ is an $8^-$-vertex of $G$, then in every $(3, 4)$-coloring of $G-v$, $v$ has a saturated neighbor in $G-v$ that cannot be recolored.    In particular,
\begin{enumerate}[$(i)$]
\item if $d(v)=2$, then for each $i\in\{3, 4\}$,  $v$ has an $i$-saturated $(i+2)^+$-neighbor $u$ that cannot be recolored.
Moreover, if $u$ is an $8^-$-vertex, then $u$ has a $j$-saturated $(j+2)^+$-neighbor where $\{i, j\}=\{3, 4\}$.
\item if $d(v)\in\{ 4, 5\}$, then $v$ has a $4$-saturated neighbor that is either a $9^+$-vertex or a $6s^+$-vertex.
\item if $d(v)\in\{6, 7, 8\}$, then $v$ has a saturated neighbor that is either a $9^+$-vertex or a $5s^+$-vertex.
\end{enumerate}
\end{lem}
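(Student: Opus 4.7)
The plan is a proof by contradiction via extension. By the minimality of $G$, the graph $G-v$ admits a $(3,4)$-coloring $\varphi$: if $d(v)\ge 3$ then $G-v$ has strictly fewer $3^+$-vertices, while if $d(v)=2$ the number of $3^+$-vertices is unchanged but $|V|+|E|$ drops. Since $G$ itself has no $(3,4)$-coloring, $\varphi$ must fail to extend to $v$, so assigning color $i\in\{3,4\}$ to $v$ fails for each $i$. Each failure is of one of two types: (A) capacitive, meaning $v$ has more than $i$ neighbors already colored $i$, or (B) saturation-based, meaning some color-$i$ neighbor $u$ of $v$ is $i$-saturated in $G-v$. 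Since $d(v)\le 8$, a short counting argument shows that both colors cannot be blocked purely by type (A): in (i) with $d(v)=2$, (A) is impossible, so (B) delivers a saturated neighbor for each $i$, and in (ii) and (iii) one cannot ``fill'' $v$'s neighborhood in both colors simultaneously without overshooting $d(v)$. Lemma~\ref{lem:edge} and Lemma~\ref{lem:3vx} then promote the degree of the witness $u$ to $(i+2)^+$ in (i), since $u$ already has $i$ color-$i$ neighbors in $G-v$ and is incident to $v$, and no $3$-vertex exists while every edge must meet a $5^+$-vertex.

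For the ``cannot be recolored'' clause, fix the saturated color-$i$ neighbor $u$ and attempt to switch $\varphi(u)$ from $i$ to $j:=7-i$. The switch is admissible in $G-v$ exactly when (a) $u$ has at most $j$ color-$j$ neighbors after the change, which requires $d(u)\le j+5$, and (b) no color-$j$ neighbor of $u$ is already $j$-saturated. If both held, then after switching $u$ the color-$i$ obstruction at $v$ attributable to $u$ disappears, $v$ becomes colorable in $i$ (in part (i) because $u$ was the unique color-$i$ blocker, and in (ii)/(iii) by the parallel analysis for the other color), contradicting that $G$ has no $(3,4)$-coloring. So either (a) fails and $u$ is a $9^+$-vertex, or (b) fails and $u$ has a $j$-saturated color-$j$ neighbor $w$. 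The ``moreover'' clause of (i) then follows immediately: an $8^-$-witness $u$ satisfies (a), so (b) fails and $w$ exists.

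The sharper degree conclusions --- $w$ is $(j+2)^+$ in (i), and the $6s^+$ / $5s^+$ alternatives in (ii) and (iii) --- come from a simultaneous exchange on the edge $uw$. If $w$ had the excluded minimum degree, perform the swap $\varphi(u)\mapsto j$ and $\varphi(w)\mapsto i$. Girth at least $5$ forces $N(u)\cap N(w)=\emptyset$, so the swap introduces no new saturation outside $\{u,w\}$, and the local counts at $u$ and $w$ each trade one color-$i$ neighbor for one color-$j$ neighbor (or vice versa), keeping both valid by construction. After the swap, $u$ no longer blocks color $i$ at $v$, so $v$ admits color $i$, a contradiction. The same swap blueprint, applied to the ``poor'' / ``semi-poor'' alternatives, produces the $6s^+$ / $5s^+$ phrasing in (ii) and (iii): if the witness were $5p$, $5s$, $6p$, or $6s$, the small number of $3^+$-neighbors leaves a $2$-neighbor at which an auxiliary recoloring frees $u$ without violating any other constraint. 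The main obstacle I foresee is case (iii) with $d(v)\in\{6,7,8\}$, where $v$ can carry a type (A) failure in one color together with a type (B) failure in the other and several saturated neighbors of differing colors can coexist; the proof will need to examine each configuration of $N(v)$ and route a swap chain through a correctly chosen $w$ while maintaining validity under Lemma~\ref{lem:edge} and Lemma~\ref{lem:3vx}, with the ``$5s^+$'' versus ``$9^+$'' dichotomy emerging as the exact boundary between the swap succeeding and failing.
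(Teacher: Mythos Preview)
Your setup is right and matches the paper: take a $(3,4)$-coloring $\varphi$ of $G-v$, note that each color $i$ is blocked at $v$ either capacitively or by an $i$-saturated neighbor, and use $d(v)\le 8$ to force at least one color $j$ to be blocked by saturation with a non-recolorable witness $u$. But two points diverge from the paper.

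First, the degree bound $d(u)\ge i+2$ in (i) does not come from Lemmas~\ref{lem:edge} and~\ref{lem:3vx}; those only give $d(u)\ge 5$, which is insufficient when $i=4$. The paper's argument is a direct count: $u$ is adjacent to $v$, to $i$ vertices of color $i$ (saturation), and to at least one vertex of color $j$ (else $u$ could be recolored $j$), giving $d(u)\ge i+2$. The same count, applied again with $d(u)\le 8$, forces the color-$j$ obstruction $x$ of $u$ to be $j$-saturated, proving the ``moreover'' clause.

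Second, your swap machinery for (ii) and (iii) is unnecessary and has a gap: when you swap colors on the edge $uw$, you need $w$ to be the \emph{only} $j$-saturated neighbor of $u$, which is not guaranteed. The paper avoids all this with a one-line observation you are missing: in cases (ii) and (iii), $v$ itself has $d(v)\ge 4$, so $v$ is a $3^+$-neighbor of $u$; and the $i$-saturated obstruction $x$ (which exists whenever $d(u)\le 8$) satisfies $d(x)\ge i+1\ge 4$, so $x$ is also a $3^+$-neighbor of $u$. Thus $u$ automatically has at least two $3^+$-neighbors, i.e., $u$ is a $(j+2)s^+$-vertex. That is the entire content of the ``$6s^+$'' and ``$5s^+$'' conclusions; no edge exchange or auxiliary recoloring through $2$-neighbors is needed.
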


\begin{proof}
Since $G-v$ is a graph with fewer edges than $G$ and the number of $3^+$-vertices did not increase, there exists a $(3,4)$-coloring $\varphi$ of $G-v$.  Note that for each $i\in\{3, 4\}$, since letting $\varphi(v)=i$ cannot be a $(3, 4)$-coloring of $G$,  $v$ has either an $i$-saturated neighbor or $i+1$ neighbors with the color $i$.  Since $v$ is an $8^-$-vertex, $v$ cannot have both four neighbors of color $3$ and five neighbors of color $4$.  Let $j\in\{3, 4\}$ such that $v$ has at most $j$ neighbors with color $j$, one of which is $j$-saturated.
If every $j$-saturated neighbor of $v$ can be recolored, then we can color $v$ with $j$, a contradiction.  Hence, $v$ must have at least one $j$-saturated neighbor that cannot be recolored.

Let $u$ be a non-recolorable $j$-saturated neighbor of $v$ and let $\{i, j\}=\{3, 4\}$.
We know that $u$ is a $(j+2)^+$-vertex, because it is adjacent to $v$, $j$ neighbors colored with $j$, and at least one neighbor $x$ colored with $i$ (since $u$ cannot be recolored with $i$).
Moreover, if $d(u)\leq 8$, then $x$ must be $i$-saturated.
In particular,

\begin{enumerate}[$(i)$]
\item if $d(v)=2$, then $v$ has both a non-recolorable $3$-saturated neighbor and a non-recolorable $4$-saturated neighbor. For $j\in \{3,4\}$, the $j$-saturated neighbor has degree at least $j+2$, and if its degree is at most $8$, then it has an $i$-saturated neighbor of degree at least $i+2$, where $\{i,j\}=\{3,4\}$.

\item if $d(v)\in\{4, 5\}$, then $v$ must have a non-recolorable $4$-saturated neighbor $u$. So $u$ is either a $9^+$-vertex or a $6s^+$-vertex.

\item if $d(v)\in\{6,7, 8\}$, then $u$ must be either a $9^+$-vertex or a $5s^+$-vertex.
\end{enumerate}
This finishes the proof of this lemma.
\end{proof}

\begin{lem}\label{lem:6face}
Let $C$ be a $6$-face
$u_1u_2u_3u_4u_5u_6$ of $G$.
\begin{enumerate}[$(a)$]
\item If $C$ contains three $2$-vertices and a $5$-vertex, then the other two vertices are $7^+$-vertices.

\item If $C$ contains exactly two $2$-vertices, then $C$ contains at most two $5p$-vertices.
Moreover,

\begin{enumerate}[$(b1)$]
\item  if $C$ contains exactly one $5p$-vertex, then it contains at most two of $5s$-vertices and $6p$-vertices;
\item if $C$ contains two $5p$-vertices, then either $C=F_{6a}$ (see Figure~\ref{figure-special-6-face}) or it contains neither $5s$-vertices nor $6p$-vertices.
\end{enumerate}

\item If $C$ contains exactly one $2$-vertex, then it contains at most one $5p$-vertex. Moreover,

\begin{enumerate}[$(c1)$]
\item if $C$ contains exactly one $5p$-vertex, then it contains at most two of $5s$-vertices and $6p$-vertices;
\item if $C$ contains no $5p$-vertices, then it contains at most four of $5s$-vertices and $6p$-vertices.
\end{enumerate}

\item If $C$ contains no $2$-vertex, then it contains no poor vertices and at most four $5s$-vertices.
\end{enumerate}
\end{lem}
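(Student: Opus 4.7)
Each part of the lemma will be attacked by contradiction, using the tools already developed: Lemma~\ref{lem:edge} (so no two $2$-vertices are adjacent, forcing the $2$-vertices of $C$ to occupy non-adjacent positions along the cycle $u_1\cdots u_6$), Lemma~\ref{lem:3vx} (so every non-$2$-vertex on $C$ has degree at least $4$), and the saturation statement Lemma~\ref{vertex-degree}. These two preliminary facts immediately restrict the admissible placements of the $2$-vertices on $C$ to just a handful of patterns in each part: e.g.\ three $2$-vertices must alternate, two $2$-vertices must be at distance $2$ or $3$ around $C$, and so on.

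The main engine is Lemma~\ref{vertex-degree}(i): for each $2$-vertex $u_i$ of $C$, one of its two face-neighbours $u_{i\pm 1}$ is a non-recolourable $3$-saturated $5^+$-vertex and the other is a non-recolourable $4$-saturated $6^+$-vertex, and whichever of these neighbours has degree at most $8$ also carries a further saturated $5^+$-neighbour off $C$. Combined with the definitions, a $5p$-vertex on $C$ has only one $3^+$-neighbour, so if it is adjacent on $C$ to a $2$-vertex then its sole off-$C$ $3^+$-neighbour is forced into a specific colour class, and the analogous remarks apply to $5s$- and $6p$-vertices. The strategy is then, for each part, to enumerate the legal placements of $2$-vertices on $C$, assume a forbidden degree or rich configuration, trace through the forced saturated colours around $C$ using Lemma~\ref{vertex-degree}(i), and then apply Lemma~\ref{vertex-degree}(ii)--(iii) at each small-degree non-$2$-vertex of $C$ to derive a conflict between its forced saturated neighbour and what is available.

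For (a), the three $2$-vertices must sit at positions $1,3,5$ and the $5$-vertex, say $u_2$, can supply only one of the two saturated roles demanded by its two $2$-neighbours, so the $4$-saturated $6^+$-requirement of $u_1$ and $u_3$ is pushed onto $u_6$ and $u_4$; a further application of Lemma~\ref{vertex-degree}(iii) through $u_5$ then bootstraps $u_4$ and $u_6$ up to $7^+$. Parts (b), (c), (d) follow the same template: for each placement of the $2$-vertices and each hypothetical over-count of $5p$-, $5s$- or $6p$-vertices on $C$, the saturations forced by Lemma~\ref{vertex-degree}(i) on the $2$-vertex sides clash with the saturated-neighbour requirements of Lemma~\ref{vertex-degree}(ii)--(iii) on the poor/semi-poor sides, unless the total count is within the claimed bound; part (d) is the cleanest case, since the absence of $2$-vertices means any poor $5^+$-vertex on $C$ would need its unique $3^+$-neighbour inside $C$, which is ruled out by the degree hypothesis.

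The main obstacle I expect is isolating the exceptional configuration $F_{6a}$ in (b2): with two $5p$-vertices and two $2$-vertices on $C$ there are only two free positions, and one has to track simultaneously which off-$C$ $3^+$-neighbour of each $5p$-vertex is forced into which colour, together with the two $2$-vertex demands, and check that exactly one relative arrangement (namely $F_{6a}$) escapes the contradiction while every other arrangement is resolved by a recolouring argument. This is not conceptually hard but requires fairly careful bookkeeping of the forced colour classes around $C$, and it is where most of the length of a full proof would sit.
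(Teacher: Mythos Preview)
Your overall strategy matches the paper's: restrict the positions of the $2$-vertices via Lemma~\ref{lem:edge}, then use Lemma~\ref{vertex-degree} to force saturations and degrees around $C$. Parts (b), (b1), (c), (c1), (c2), (d) go through essentially as you sketch, and you correctly anticipate that (b2) needs an explicit recolouring to rule out the non-$F_{6a}$ arrangement.

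The gap is in part (a). Your claim that ``a further application of Lemma~\ref{vertex-degree}(iii) through $u_5$ then bootstraps $u_4$ and $u_6$ up to $7^+$'' does not work. First, $u_5$ is a $2$-vertex, so part (iii) of Lemma~\ref{vertex-degree} does not apply to it; and applying part (i) to $u_5$ only tells you that one of $u_4,u_6$ is a $4$-saturated $6^+$-vertex and the other a $3$-saturated $5^+$-vertex in the particular colouring of $G-u_5$. Applying part (iii) instead to $u_4$ or $u_6$ (assuming one has degree $6$) yields structural information about an off-face neighbour, but nothing that forces the degree up to $7$. Lemma~\ref{vertex-degree} by itself simply does not suffice here.

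What the paper does (taking the $5$-vertex to be $u_4$ and supposing for contradiction that $u_6$ is a $6$-vertex) is: fix a $(3,4)$-colouring $\varphi$ of $G-u_5$, deduce from Lemma~\ref{vertex-degree}(i) that $u_4$ is $3$-saturated and $u_6$ is $4$-saturated, and hence that $\varphi(u_3)=3$ and $\varphi(u_1)=4$ (since the required further saturated neighbours of $u_4$ and $u_6$ must lie off $C$). Then an explicit recolouring finishes: if $\varphi(u_2)=3$, recolour $u_3$ with $4$ and colour $u_5$ with $3$; if $\varphi(u_2)=4$, recolour $u_1$ with $3$ and colour $u_5$ with $4$. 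In each case one checks this yields a valid $(3,4)$-colouring of $G$. You need this recolouring step; add it to your plan for (a).
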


\begin{proof}
Note that by Lemma~\ref{lem:edge}, no two $2$-vertices are adjacent to each other.
We will show that if $C$ is not one of the above, then we can obtain a $(3, 4)$-coloring of $G$, which is a contradiction.

{\bf{ (a)}}: Let $u_1, u_3, u_5$ be the $2$-vertices and let $u_4$ be a $5$-vertex of $C$.   By Lemma~\ref{vertex-degree} $(i)$, both $u_2$ and $u_6$ are $6^+$-vertices, so without loss of generality, suppose to the contrary that $u_6$ is a $6$-vertex.  Since $G-u_5$ is a graph with fewer edges than $G$ and the number of $3^+$-vertices does not increase, there is a $(3, 4)$-coloring $\varphi$ of $G-u_5$.
By Lemma~\ref{vertex-degree} $(i)$, we know $u_4$ is $3$-saturated and has a $4$-saturated $6^+$-neighbor  and $u_6$ is $4$-saturated and has a $3$-saturated $5^+$-neighbor. Hence, $\varphi(u_3)=3$ and $\varphi(u_1)=4$.

If $\varphi(u_2)=3$, then recolor $u_3$ with $4$ and color $u_5$ with $3$ to obtain a $(3, 4)$-coloring of $G$.
If $\varphi(u_2)=4$, then recolor $u_1$ with $3$ and color $u_5$ with $4$ to obtain a $(3, 4)$-coloring of $G$.

{\bf (b)}: Note that each $5p$-vertex on $C$ must have a $2$-neighbor on $C$, and by Lemma~\ref{vertex-degree} $(i)$,  each 2-vertex has at most one $5p$-neighbor. So $C$ contains at most two $5p$-vertices because it has exactly two $2$-vertices.


{\bf (b1)} Assume that $u_1$ is the unique $5p$-vertex on $C$.  By Lemma~\ref{vertex-degree} $(ii)$, none of $u_2,u_6$ is a $5s$- or $6p$-vertex. If $u_4$ is neither a $5s$-vertex nor a $6p$-vertex, then $C$ contains at most two of $5s$-vertices and $6p$-vertices. If $u_4$ is a $6p$-vertex, then either $u_3$ or $u_5$ is a $2$-vertex, so again $C$ contains at most two of $5s$-vertices and $6p$-vertices. If $u_4$ is a $5s$-vertex, then by Lemma~\ref{vertex-degree} $(ii)$, one of $u_3$ and $u_5$ must be
a $6s^+$-vertex, a $9^+$-vertex, or a $2$-vertex.
Therefore, $C$ contains at most two of $5s$-vertices and $6p$-vertices.

{\bf(b2)}
Now assume that $C$ contains two $5p$-vertices. Observe that if $u_1, u_4$ are the two $5p$-vertices on $C$, then by Lemma~\ref{vertex-degree} $(ii)$, none of $u_2,u_3,u_5,u_6$ is a $5s$-vertex or a $6p$-vertex, as claimed.
Therefore, we may assume that $u_1, u_3$ are the two $5p$-vertices on $C$.

Note that $u_2$ cannot be a $2$-vertex by Lemma~\ref{vertex-degree} $(i)$. So both $u_4$ and $u_6$ are 2-vertices. By Lemma~\ref{vertex-degree} $(i)$ and $(ii)$, both $u_2$ and $u_5$ are $6^+$-vertices. We may assume that $u_5$ is a $6p$-vertex, for otherwise $C$ contains neither $5s$-vertices nor $6p$-vertices. Assume that $C$ is not a special 6-face $F_{6a}$, which implies that $u_2$ is a $6$-vertex.
By Lemma~\ref{vertex-degree} $(i)$, in a $(3,4)$-coloring $\varphi$ of $G-u_6$, we know $u_1$ is $3$-saturated and $u_5$ is 4-saturated and both are non-recolorable. It follows that $u_2$ is $4$-saturated, $u_4$ is colored with $4$ and non-recolorable, and furthermore $u_3$ is 3-saturated.
Now we can recolor $u_4, u_3, u_2, u_1$ with $3, 4, 3, 4$ respectively, and  color $u_6$ with $3$ to obtain a $(3, 4)$-coloring of $G$.

{\bf (c)}: Let $u_1$ be the unique $2$-vertex on $C$.
A $5p$-vertex must have a $2$-neighbor on $C$, and by Lemma~\ref{vertex-degree} $(i)$, a $2$-vertex has at most one $5p$-neighbor, so $C$ contains at most one $5p$-vertex.

{\bf (c1)} Assume that $C$ has one $5p$-vertex $u_2$. By Lemma~\ref{vertex-degree} $(i)$ and $(ii)$, $u_6$ cannot be a $5$-vertex, and $u_3$ cannot be a $5s$-vertex or a $6p$-vertex.
If $u_6$ is not a $6p$-vertex, then $C$ has at most two of $5s$-vertices and $6p$-vertices.
If $u_6$ is a $6p$-vertex, then $u_4$ and $u_5$ cannot be both $5s$-vertices by Lemma~\ref{vertex-degree} $(ii)$.
Note that either $u_4$ or $u_5$ cannot be $6p$-vertices since $C$ has only one $2$-vertex $u_1$.

{\bf (c2)} Now assume that $C$ contains no $5p$-vertices.
Consider three consecutive vertices $u_{i-1}, u_i, u_{i+1}$ on $C$.
If $u_i$ is a $6p$-vertex, then either $u_{i-1}$ or $u_{i+1}$ must be a $2$-vertex.
If $u_i$ is a $5s$-vertex, then by Lemma~\ref{vertex-degree} $(ii)$,
either $u_{i-1}$ or $u_{i+1}$ is
a $6s^+$-vertex, a $9^+$-vertex, or a $2$-vertex.
Therefore, $C$ contains at most four of $5s$-vertices and $6p$-vertices.


{\bf (d)}:   If $C$ contains no 2-vertex, then it contains neither a $5p$-vertex nor a $6p$-vertex. By Lemma~\ref{vertex-degree} $(ii)$, a $5$-vertex must have a $6^+$-neighbor, so the two $3^+$-neighbors of a $5s$-vertex cannot be both $5s$-vertices. Therefore, $C$ contains at most four $5s$-vertices.
\end{proof}


\begin{lem}\label{lem:F2}
If $F_{6b}$ is a $6$-face with three $2$-vertices and three $6p$-vertices (see  Figure~\ref{figure-special-6-face}), then $F_{6b}$ cannot share an edge with a $5$-face with two $2$-vertices.
\end{lem}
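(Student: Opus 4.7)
The plan is to derive a contradiction by extending a $(3,4)$-coloring of $G-u_2$ to $G$. First, I would nail down the structure: Lemma~\ref{lem:edge} forces the three $2$-vertices and three $6p$-vertices on $F_{6b}$ to alternate. Any shared edge with a $5$-face $F_5$ has both endpoints on $F_{6b}$, and since each $2$-vertex $u_{2k}$ of $F_{6b}$ has only two neighbors (both on $F_{6b}$), a shared edge incident to $u_{2k}$ forces the whole path $u_{2k-1}u_{2k}u_{2k+1}$ onto $F_5$. Sharing three or more consecutive edges would require an edge between two vertices of $F_{6b}$ at distance $2$ along $F_{6b}$, creating a triangle and violating girth~$5$. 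Hence the shared boundary is a path of length~$2$; label it $u_1u_2u_3$ so that $F_{6b}=u_1u_2u_3u_4u_5u_6$ with $u_1,u_3,u_5$ the $6p$-vertices and $u_2,u_4,u_6$ the $2$-vertices, and $F_5=u_1u_2u_3xy$. Since $F_5$ contains a second $2$-vertex non-adjacent to $u_2$, by symmetry $y$ is a $2$-vertex and (by Lemma~\ref{lem:edge} applied to $xy$) $x$ is a $5^+$-vertex. Because $u_3$'s other neighbors are all $2$-vertices, $x$ is the unique $3^+$-neighbor of $u_3$; let $w_1$ denote the unique $3^+$-neighbor of $u_1$.

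Next, I would delete $u_2$ to obtain by minimality a $(3,4)$-coloring $\varphi$ of $G-u_2$. Applying Lemma~\ref{vertex-degree} $(i)$ to $u_2$: both of its neighbors $u_1, u_3$ (each degree $6\leq 8$) are non-recolorably saturated in opposite colors, and by the ``moreover'' clause their $j$-saturated $(j+2)^+$-neighbors must be the unique $3^+$-neighbors $w_1$ and $x$ respectively. I would split into Case~I ($\varphi(u_1)=3$, $\varphi(u_3)=4$, so $\varphi(w_1)=4$ is $4$-saturated and $\varphi(x)=3$ is $3$-saturated) and Case~II ($\varphi(u_1)=4$, $\varphi(u_3)=3$, so $\varphi(w_1)=3$ and $\varphi(x)=4$). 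In Case~II, the $4$-saturation of $u_1$ pins the colors of all four $2$-neighbors of $u_1$ (including $y$) to $4$. In Case~I, the $4$-saturation of $u_3$ forces its four $2$-neighbors (including $u_4$) to be colored $4$, and if moreover $\varphi(y)=4$ then the $3$-saturation of $u_1$ further forces its three other $2$-neighbors (including $u_6$) to be colored $3$.

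Finally, I would execute one of three simple recoloring moves, chosen by case. In Case~II, recolor $y$ from $4$ to $3$ (both its neighbors are colored $4$, so the swap creates no $3$-neighbor conflict and releases $u_1$'s $4$-saturation) and color $u_2$ with $4$. In Case~I with $\varphi(y)=3$, both neighbors of $y$ are colored $3$, so recolor $y$ from $3$ to $4$ (releasing $u_1$'s $3$-saturation) and color $u_2$ with $3$. In Case~I with $\varphi(y)=4$, recolor either $u_4$ from $4$ to $3$ and color $u_2$ with $4$, or recolor $u_6$ from $3$ to $4$ and color $u_2$ with $3$; the choice depends on whether $u_5$ is currently $3$-saturated in color~$3$, and at least one succeeds. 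The main obstacle will be this last case: verifying that at least one of the two swaps ($u_4$ or $u_6$) is admissible reduces to case-checking $u_5$'s color and saturation status, while the remaining verifications — that coloring $u_2$ does not over-saturate $u_3$ and does not affect $w_1$ — follow routinely from the constraints pinned down by the two applications of Lemma~\ref{vertex-degree} $(i)$.
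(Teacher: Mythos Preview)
Your proposal is correct and follows essentially the same approach as the paper's proof: delete the $2$-vertex on the shared length-$2$ path, use Lemma~\ref{vertex-degree}$(i)$ to pin down the saturation of the two adjacent $6p$-vertices and their unique $3^+$-neighbors, then recolor a suitable $2$-vertex on $F_5$ or on $F_{6b}$ to extend the coloring. The only differences are cosmetic---your indexing puts the $6p$-vertices at odd positions while the paper puts them at even positions, and in the final subcase you present the two recoloring options (through $u_4$ or $u_6$) symmetrically and argue that $u_5$ can block at most one, whereas the paper tries one first and falls back to the other.
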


\begin{proof}
Let $C=u_1\ldots u_6$ be an $F_{6b}$ with three $2$-vertices $u_1, u_3, u_5$ and three $6p$-vertices.
Note that two $2$-vertices cannot be adjacent to each other by Lemma~\ref{lem:edge}.
Suppose to the contrary that a $5$-face $C'$ shares an edge with $C$. Then $C$ and $C'$ share exactly two edges and without loss of generality, assume that $C'=u_6u_1u_2v_1v_2$ and, by symmetry, we may assume that $v_1$ is a $2$-vertex.

Since $G-u_1$ is a graph with fewer edges than $G$ and the number of $3^+$-vertices did not increase, there is a $(3, 4)$-coloring $\varphi$ of $G-u_1$.  By Lemma~\ref{vertex-degree} $(i)$, both $u_2$ and $u_6$ are non-recolorable and one of $u_2$ and $u_6$ is $3$-saturated and the other is $4$-saturated.

First assume that $u_6$ is $3$-saturated and $u_2$ is $4$-saturated.
Since $u_2$ is a $6$-vertex, by Lemma~\ref{vertex-degree} $(i)$, $u_2$ must have exactly one $3$-saturated neighbor and all other neighbors are colored with the color $4$.
In particular, $\varphi(v_1)=4$.
Also, by Lemma~\ref{vertex-degree} $(i)$, $u_6$ has a $4$-saturated neighbor, which must be $v_2$. Hence, we can recolor $v_1$ with the color $3$ and color $u_1$ with the color $4$ to obtain a $(3, 4)$-coloring of $G$, which is a contradiction.

Now assume that $u_6$ is $4$-saturated and $u_2$ is $3$-saturated.  
By Lemma~\ref{vertex-degree} $(i)$, $u_6$ must have a $3$-saturated neighbor, which must be $v_2$, and all other neighbors are colored with the color 4. In particular, $\varphi(u_5)=4$. 
Also, by Lemma~\ref{vertex-degree}, we know that $u_2$ must have a $4$-saturated neighbor, which is neither $u_3$ nor $v_1$.
If $\varphi(v_1)=3$, then we can recolor $v_1$ with the color $4$ and color $u_1$ with the color $3$ to obtain a $(3, 4)$-coloring of $G$, which is a contradiction.
Therefore, $\varphi(v_1)=4$, which further implies that $\varphi(u_3)=3$.  Now, if we can recolor $u_3$ with the color $4$, then we can color $u_1$ with the color $3$ to obtain a $(3, 4)$-coloring of $G$, which is a contradiction.
Hence, $u_4$ must be $4$-saturated, and in particular $\varphi(u_4)=4$. Finally, we can recolor $u_5$ with the color $3$ and color $u_1$ with $4$ to obtain a $(3, 4)$-coloring of  $G$, which is a contradiction.
\end{proof}

\begin{lem}\label{lem:5-face-1-2vtx}
If $C$ is a $5$-face $u_1\ldots u_5$ with exactly one $2$-vertex $u_1$, then either
\begin{itemize}
\item $C$ contains at most two of $5p$-, $5s$-, and $6p$-vertices, or
\item $C$ is a special $5$-face $F_{5c}$ or $F_{5d}$ in Figure~\ref{figure-special-6-face}.
\end{itemize}
\end{lem}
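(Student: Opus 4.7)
The plan is to argue by contradiction along the lines of Lemma~\ref{lem:F2}: suppose $C$ contains at least three vertices from $\{5p,\,5s,\,6p\}$ and is neither $F_{5c}$ nor $F_{5d}$, and build a $(3,4)$-coloring of $G$.

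First I would narrow down where the ``poor'' vertices can sit. A $5p$- or $6p$-vertex has exactly one $3^+$-neighbor, so four (resp.\ five) of its neighbors are $2$-vertices. Since $u_1$ is the only $2$-vertex on $C$, any $5p$- or $6p$-vertex on $C$ must be adjacent to $u_1$, and hence lies in $\{u_2,u_5\}$. Consequently, positions $u_3$ and $u_4$ can contribute to the target count only as $5s$-vertices; and for each such $5s$-vertex, Lemma~\ref{vertex-degree}$(ii)$ forces at least one of its two $C$-neighbors to be a $6s^+$- or $9^+$-vertex (since its only two $3^+$-neighbors lie on $C$). This reduces the statement to a short list of combinatorial patterns for the types of $u_2,u_3,u_4,u_5$.

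Next I would take a $(3,4)$-coloring $\varphi$ of $G-u_1$, which exists by the minimality of $G$. By Lemma~\ref{vertex-degree}$(i)$, the two neighbors $u_2,u_5$ of $u_1$ are non-recolorable and saturated with opposite colors; up to symmetry assume $\varphi(u_2)=3$ with $u_2$ $3$-saturated, and $\varphi(u_5)=4$ with $u_5$ $4$-saturated. The same lemma (and the bound on $d(u_i)$) provides a $4$-saturated $6^+$-neighbor of $u_2$ and a $3$-saturated $5^+$-neighbor of $u_5$; these witnesses are the engine that forces colors around $C$.

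The core of the argument is a case split on which of $u_2, u_5$ belongs to $\{5p,6p\}$ and how the $5s$-vertices are distributed among $u_2,\dots,u_5$. In each case I would imitate the color-propagation of Lemma~\ref{lem:F2}: a $5p$- or $6p$-vertex at $u_2$ forces nearly all its neighbors (in particular $u_3$) to inherit color $3$ (apart from the unique $4$-saturated witness); a $5s$-vertex at $u_3$ then pins down $\varphi(u_4)$; and symmetric statements hold from the $u_5$ side. The aim is to exhibit a short recoloring walk along $C$ (for instance recolor $u_3$, then $u_2$, then color $u_1$) that yields a $(3,4)$-coloring of $G$, contradicting minimality. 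The configurations in which every such recoloring is blocked are precisely those in which the saturation witnesses on both sides lock up simultaneously, which I expect to coincide exactly with $F_{5c}$ and $F_{5d}$.

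The main obstacle will be bookkeeping. With three distinguished vertices distributed over four positions of $C$, two ways of placing the saturated colors on $u_2,u_5$, and the symmetry of $C$, there are a number of subcases. The hardest are expected to be those where both $u_2$ and $u_5$ are $5p$- or $6p$-vertices and one of $u_3,u_4$ is a $5s$-vertex, since then the saturation witnesses from both Lemma~\ref{vertex-degree}$(i)$ and $(ii)$ propagate inward and leave very little room for recoloring; these should be the precise configurations that are certified as the special faces $F_{5c}$ and $F_{5d}$.
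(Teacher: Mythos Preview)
Your overall plan---contradiction, structural narrowing of where the special vertices can sit, then a coloring of $G$ minus a vertex and a recoloring chain along $C$---matches the paper's strategy, and your reduction (only $u_2,u_5$ can be $5p/6p$; hence one of $u_3,u_4$ must be a $5s$-vertex, and its required $6s^+$- or $9^+$-neighbor pins down $u_4$) is exactly how the paper begins.

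The point where you diverge from the paper is the choice of deleted vertex. You propose to always delete $u_1$ and run the recoloring from there. The paper does this only in the subcase where $u_2$ is a $5s$-vertex; in the decisive subcase where $u_2$ is a $6p$-vertex (which is where $F_{5c}$ and $F_{5d}$ actually appear), the paper instead deletes a $2$-neighbor of the $5s$-vertex $u_3$. This is not a cosmetic change: applying Lemma~\ref{vertex-degree} at $u_3$ makes $u_4$ the $4$-saturated witness directly, then propagates to force $u_1,u_5$ saturated and finally forces $u_4$ to be non-recolorable \emph{for a reason other than $u_3$ and $u_5$}. That last clause is precisely what yields $d(u_4)\ge 7$ with a third $3^+$-neighbor (giving $F_{5c}$) or $d(u_4)\ge 9$ (giving $F_{5d}$). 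From $G-u_1$ you readily get that $u_4$ is $4$-saturated, but the obstruction to recoloring $u_4$ is already provided by its $3$-saturated neighbors $u_3,u_5$ on $C$, so you do not automatically extract the extra off-$C$ witness needed to pin down the degree of $u_4$. Your longer chain $u_2\to 3,\ u_3\to 4,\ u_4\to 3,\ u_5\to 4$ can be pushed to recover this information, but it creates new failure points (e.g.\ when $u_5$ is a $5s$-vertex with a $4$-saturated off-$C$ neighbor) that the paper's choice sidesteps entirely.

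One more correction: your expectation that the blocked configurations are ``where both $u_2$ and $u_5$ are $5p$- or $6p$-vertices'' is off. In the paper's analysis the special faces $F_{5c},F_{5d}$ arise when $u_2$ is a $6p$-vertex, $u_3$ is a $5s$-vertex, $u_5$ is any of $5p/5s/6p$, and $u_4$ is a $7r^+$- (for $F_{5c}$) or $9s^+$-vertex (for $F_{5d}$); in particular $u_5$ need not be poor. So if you keep your plan, adjust both the case split and the deleted vertex in the $u_2=6p$ branch.
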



\begin{proof}
Assume that $C$ contains at least three $5p$-, $5s$-, and $6p$-vertices. By symmetry, we may assume that $u_3$ is a $5s$-vertex. Note that by Lemma~\ref{vertex-degree},  $u_2$ is not a $5p$-vertex, and $u_3$ has a $6s^+$-neighbor or $9^+$-neighbor, which is either $u_2$ or $u_4$.  If $u_2$ is a $6s^+$-vertex or $9^+$-vertex, then both $u_3$ and $u_4$ are $5s$-vertices, so by Lemma~\ref{vertex-degree}, both $u_2$ and $u_5$ are $6s^+$-vertices or $9^+$-vertices, which is a contradiction.
Therefore we may assume that $u_4$ is a $6s^+$-vertex or $9^+$-vertex.
Now $u_2$ is a $5s$-vertex or $6p$-vertex, and $u_5$ is a $5p$-, $5s$-, or $6p$-vertex.

First assume that $u_2$ is a $5s$-vertex.
Since $G-u_1$ is a graph with fewer edges than $G$ and the number of $3^+$-vertices did not increase, there is a $(3, 4)$-coloring $\varphi$ of $G-u_1$.
By Lemma~\ref{vertex-degree} $(i)$, $u_2$ must be $3$-saturated and $u_5$ must be a $4$-saturated $6p$-vertex.
This further implies that $u_4$ is $3$-saturated.
Note that $u_2$ must have a $4$-saturated neighbor and three neighbors of color $3$.
Since $\varphi(u_4)=3$, we know $u_3$ cannot be the $4$-saturated neighbor of $u_2$, so $\varphi(u_3)=3$.
Now, since $u_3$ has neither five neighbors colored with the color $4$ nor a $4$-saturated neighbor, $u_3$ can be recolored with $4$.
Now, by recoloring $u_3$ with the color $4$ and coloring $u_1$ with the color $3$, we obtain a $(3, 4)$-coloring of $G$, which is a contradiction.

Now assume that $u_2$ is a $6p$-vertex.  Let $u$ be a 2-neighbor of $u_3$ that is not on $C$.
Since $G-u$ is a graph with fewer edges than $G$ and the number of $3^+$-vertices did not increase, there is a $(3, 4)$-coloring $\varphi$ of $G-u$.
By Lemma~\ref{vertex-degree} $(ii)$, $u_3$ is $3$-saturated and $u_3$ has a  $4$-saturated $6^+$-neighbor $x$.
If $x=u_2$, then we can recolor $u_2$ with the color 3, and color $u_3$ and $u$ with the colors 4 and 3, respectively, to obtain a $(3, 4)$-coloring of $G$, which is a contradiction.

Therefore $x=u_4$, which implies that $\varphi(u_4)=4$ and $\varphi(u_2)=3$.
Since recoloring $u_2$ with the color $4$ must not be possible, we know that all neighbors of $u_2$, except $u_3$, are colored with the color $4$.
In particular, $\varphi(u_1)=4$ and $u_1$ is non-recolorable.
This further implies that $u_5$ is $3$-saturated and non-recolorable.
Now, $u_4$ must be $4$-saturated and non-recolorable. That is to say, $u_4$ must have four neighbors colored with $4$. Moreover, $u_4$ must have either a $3$-saturated neighbor other than $u_3$, $u_5$, or at least four neighbors other than $u_3$ colored with $3$.
Hence, $u_4$ is a $7r^+$-vertex or $9s^+$-vertex, that is, $C$ is either $F_{5c}$ or $F_{5d}$.
\end{proof}


\begin{lem}\label{lem:7-face}
If $F$ is a $7$-face, then one of the following is true:
\begin{itemize}
\item $F$ has at most six $2$-, $5p$-, $5s$-, or $6p$-vertices;
\item $F$ has at least two $5s$-vertices;
\item $F$ is a special $7$-face $F_7$ (see Figure~\ref{figure-special-6-face}).
\end{itemize}
\end{lem}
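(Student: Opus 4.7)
I plan to argue by contradiction. Suppose the $7$-face $F = u_1 u_2 \cdots u_7$ has all seven vertices of types in $\{2\text{-vertex}, 5p, 5s, 6p\}$, at most one $5s$-vertex, and $F \neq F_7$. Let $a, b, c, d$ be the number of $2$-, $5p$-, $5s$-, $6p$-vertices on $F$, so $a+b+c+d = 7$ and $c \leq 1$. By Lemma~\ref{lem:edge}, the $2$-vertices are pairwise non-adjacent on $F$, giving $a \leq 3$. Because each $5p$- and $6p$-vertex has exactly one $3^+$-neighbor in $G$ while every face-vertex is from the restricted list, each such vertex on $F$ has at least one face-neighbor that is a $2$-vertex. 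Summing incidences (each $2$-vertex contributes two face-adjacencies) yields $b + d \leq 2a - n_{25s}$, where $n_{25s}$ counts the $2$-to-$5s$ face-adjacencies. Combined with $a+b+c+d=7$ and $c \leq 1$, this forces $a \in \{2, 3\}$.

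Next I would eliminate $a = 2$. Here $c = 1$, $b+d = 4$, and the bound forces $n_{25s} = 0$, so the $5s$-vertex has both face-neighbors non-$2$. Up to rotation, the two $2$-vertices lie at positions $\{u_1, u_3\}$ or $\{u_1, u_4\}$. In the first case, each of $u_5, u_6$ has both face-neighbors non-$2$, so both must be $5s$, contradicting $c = 1$. In the second case only $u_6$ has both face-neighbors non-$2$, so $u_6$ is the unique $5s$-vertex; then $u_2$ is a $5p$- or $6p$-vertex whose unique $3^+$-neighbor must be $u_3$ (else $u_3$ would be a $2$-vertex), and by Lemma~\ref{vertex-degree}$(ii)$ or $(iii)$, $u_3$ must be a $9^+$- or $6s^+$-vertex (if $u_2$ is $5p$) or a $9^+$- or $5s^+$-vertex (if $u_2$ is $6p$); but $u_3$ is restricted to $\{5p, 5s, 6p\}$, a contradiction.

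For $a = 3$, the three $2$-vertices must occupy (up to rotation and reflection) the positions $\{u_1, u_3, u_5\}$, forced by the unique gap pattern $1{+}1{+}2$ for three independent vertices on a $7$-cycle. The only adjacent pair of non-$2$-positions is $\{u_6, u_7\}$. If $u_6$ is a $5p$- or $6p$-vertex, its unique $3^+$-neighbor is forced to be $u_7$ (else the face-neighbor $u_7$ would be a $2$-vertex), and symmetrically for $u_7$. If the $5s$-vertex (if any) sits at $u_2$ or $u_4$, then both $u_6$ and $u_7$ are $5p$ or $6p$ vertices whose sole $3^+$-neighbor lies in the restricted list, violating Lemma~\ref{vertex-degree}$(ii)$--$(iii)$; this simultaneously rules out $c = 0$. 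Hence $c = 1$ and, by symmetry, the $5s$-vertex is $u_6$. Then $u_7$'s unique $3^+$-neighbor is $u_6$, and Lemma~\ref{vertex-degree}$(ii)$ forbids $u_7$ from being $5p$ since $u_6 = 5s$ is not a $6s^+$-vertex, while Lemma~\ref{vertex-degree}$(iii)$ permits $u_7$ to be $6p$ since $u_6$ is a $5s^+$-vertex. The resulting configuration---three $2$-vertices at $u_1, u_3, u_5$, a $5s$-vertex at $u_6$, a $6p$-vertex at $u_7$, and $5p$- or $6p$-vertices at $u_2, u_4$---matches the special face $F_7$, contradicting $F \neq F_7$.

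The main obstacle is the bookkeeping in Lemma~\ref{vertex-degree}: each application demands knowing whether the constrained saturated $3^+$-neighbor lies on the face or off of it. When it lies on the face, the restricted type list produces an immediate contradiction; when it lies off-face, no contradiction is possible and the sub-case survives. Careful branching on this dichotomy---together with the rigid geometry forced by two or three independent vertices on a $7$-cycle---is what funnels all remaining sub-cases into the single configuration $F_7$.
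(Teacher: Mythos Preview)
Your structural reduction is largely correct and closely parallels the paper's argument, but the final step contains a genuine gap.  In your $a=3$ case you end with the configuration
\[
u_1,u_3,u_5=2,\quad u_6=5s,\quad u_7=6p,\quad u_2,u_4\in\{5p,6p\},
\]
and you assert that this ``matches the special face $F_7$''.  It does not: $F_7$ is the $7$-face with three $2$-vertices, one $5s$-vertex and \emph{three} $6p$-vertices; a $5p$-vertex at $u_2$ or $u_4$ gives a different configuration.

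You can pin down $u_4$ easily: $u_5$ is a $2$-vertex whose other neighbour $u_6$ has degree $5$, so Lemma~\ref{vertex-degree}$(i)$ forces $d(u_4)\geq 6$, i.e.\ $u_4=6p$.  But the same trick fails for $u_2$.  Both face-neighbours $u_1,u_3$ of $u_2$ are $2$-vertices whose \emph{other} neighbours ($u_7$ and $u_4$) already have degree $6$, so Lemma~\ref{vertex-degree}$(i)$ imposes no lower bound on $d(u_2)$ beyond $5$.  And since the unique $3^+$-neighbour of $u_2$ lies off the face, Lemma~\ref{vertex-degree}$(ii)$ merely says that off-face neighbour is a $9^+$- or $6s^+$-vertex---no contradiction.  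Thus the sub-case $u_2=5p$ survives your entire analysis.

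This surviving case (three $2$-vertices, one $5s$, two $6p$, one $5p$) is exactly the paper's ``only remaining case'', and the paper disposes of it with a short recolouring argument: delete the $2$-vertex between the $5p$-vertex and one of the $6p$-vertices, read off the forced saturations around the cycle, and recolour three consecutive vertices to extend the colouring to $G$.  Degree bookkeeping from Lemma~\ref{vertex-degree} alone is not enough; you need a colouring argument of this kind to close the proof.
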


\begin{proof}
Note that two $2$-vertices cannot be adjacent to each other by Lemma~\ref{lem:edge}.
Suppose to the contrary that $F$ contains seven of $2$-, $5p$-, $5s$-, and $6p$-vertices, and at most one $5s$-vertex. Denote the vertices around $F$ by $u_1,u_2,\ldots, u_7$ in order.
Without loss of generality, we may assume that one vertex $u_1$ is a $5s$-vertex, for otherwise, two $6p^-$-vertices would be adjacent to each other, which contradicts Lemma~\ref{vertex-degree} $(ii)$ and $(iii)$.  All other vertices of $F$ are $2$-vertices and $6p^-$-vertices.

Without loss of generality, we may assume that $u_2, u_4, u_6$ are $2$-vertices and $u_3, u_5, u_7$ are $6p^-$-vertices.
Since $u_2$ is a $2$-vertex, by Lemma~\ref{vertex-degree} $(i)$, we know that $u_3$ is a $6p$-vertex.
Since a $5p$-vertex cannot have a $5s$-neighbor by Lemma~\ref{vertex-degree} $(ii)$, we know that $u_7$ must be a $6p$-vertex.
If $u_5$ is a $6p$-vertex, then $F$ is a special face $F_7$.

The only remaining case is when $u_5$ is a $5p$-vertex and $u_3,u_7$ are $6p$-vertices.
Since $G-u_4$ is a graph with fewer edges than $G$ and the number of $3^+$-vertices did not increase, there is a $(3, 4)$-coloring $\varphi$ of $G-u_4$.
By Lemma~\ref{vertex-degree} $(i)$, $u_3$ and $u_5$ is $4$-saturated and $3$-saturated, respectively.
In particular, $\varphi(u_3)=\varphi(u_2)=4$ and $\varphi(u_5)=\varphi(u_6)=3$.
This further implies that $u_1$ is $3$-saturated and $u_7$ is $4$-saturated.
Now, recoloring $u_2, u_1, u_7$ with the color $3, 4, 3$, respectively, and coloring $u_4$ with the color $4$ gives a $(3, 4)$-coloring of $G$, which is a contradiction.
\end{proof}

\section{Discharging}


For each element $x\in V(G)\cup F(G)$, let $\mu(x)$ and $\mu^*(x)$ denote the {\em initial charge} and {\em final charge}, respectively, of $x$. Let $\mu(x)=d(x)-4$, so by Euler's formula, $$\sum_{x\in V(G)\cup F(G)} \mu(x)=-8.$$

\begin{figure}[ht]
\begin{center}
\includegraphics[scale=0.55]{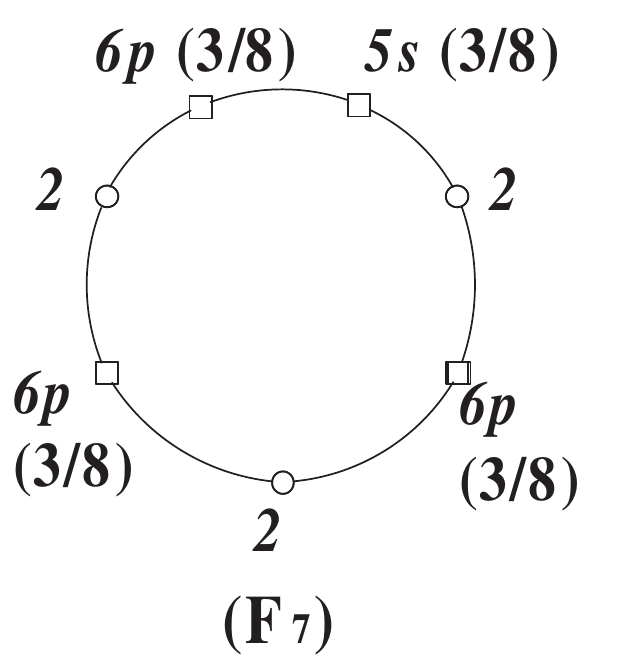}
\includegraphics[scale=0.55]{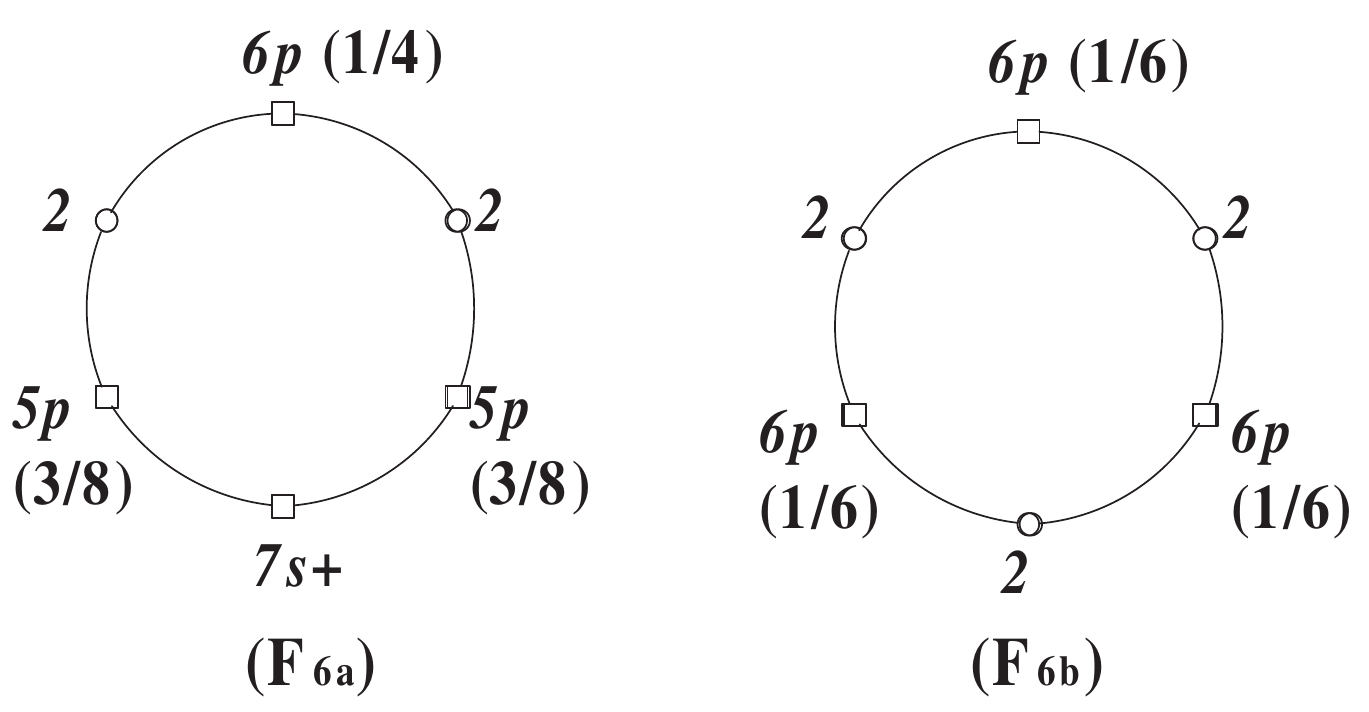}\\
\includegraphics[scale=0.55]{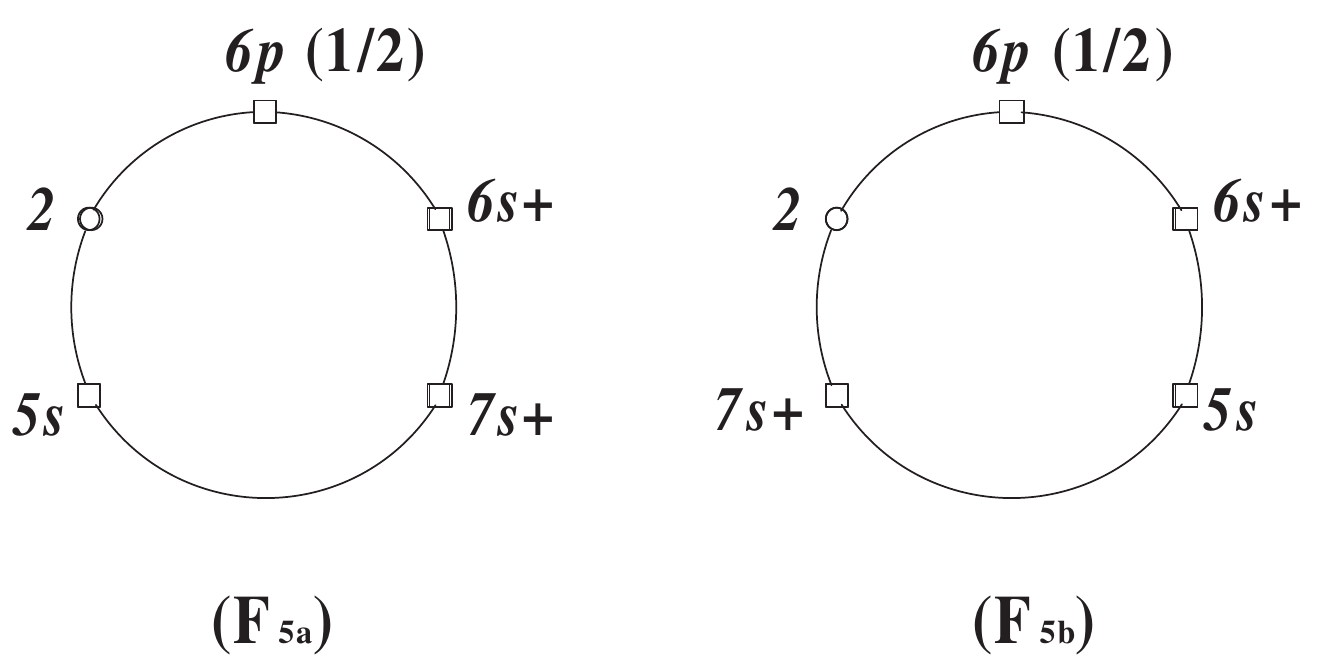}
\includegraphics[scale=0.55]{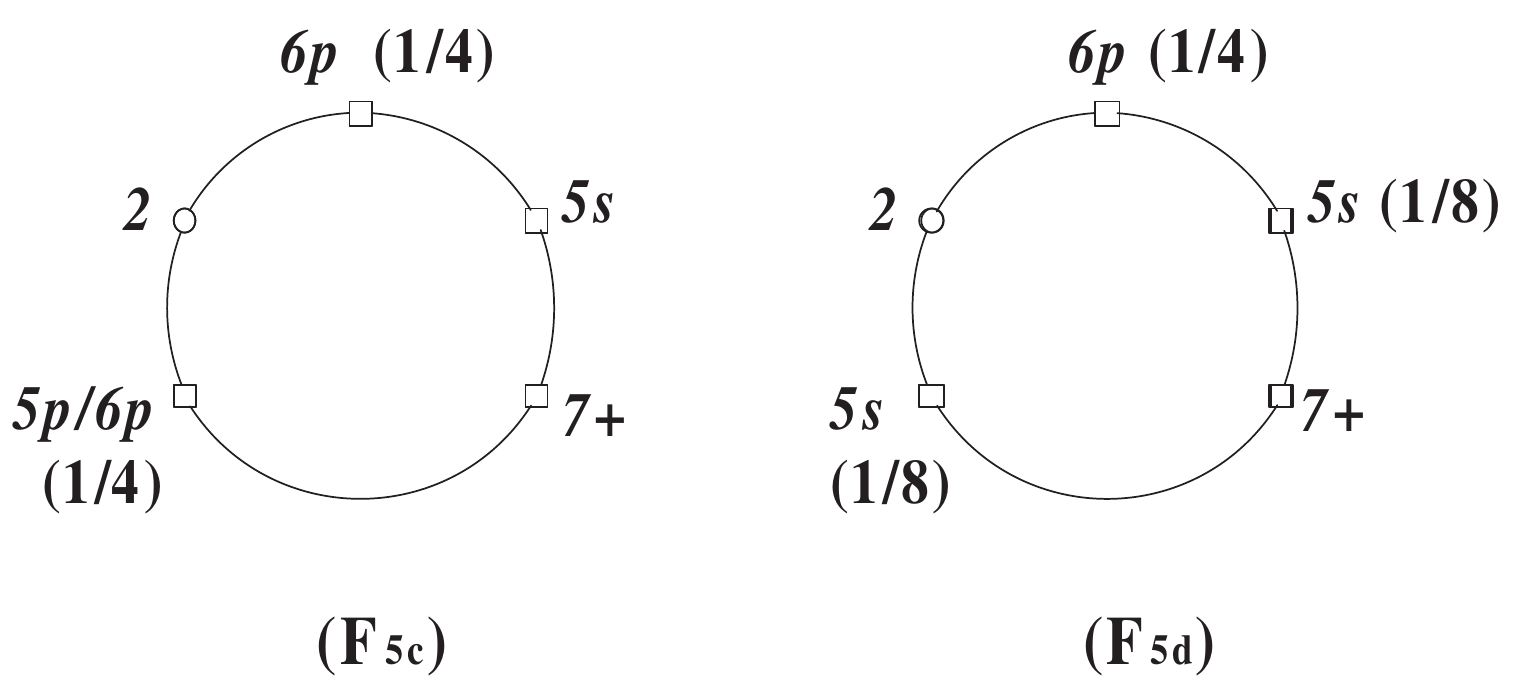}
\caption{Special  $7$-face, 6-faces, and 5-faces}
\label{figure-special-6-face}
\end{center}
\end{figure}

Here are the discharging rules:

 \begin{enumerate}[(R1)]
\item Let $v$ be a $5^+$-vertex. Then $v$ gives $\frac{1}{2}$ to each adjacent $2$-vertex; moreover,
\begin{enumerate}
\item if $d(v)\ge 8$, then $v$ gives $\frac{1}{2}$ to each adjacent $5p$-, $5s$-, $6p$-vertex and incident heavy edge;

\item if $d(v)=7$, then $v$ first gives $\frac{1}{2}$ to each adjacent $5p$-vertex and $6p$-vertex, then distributes its remaining charge evenly to adjacent $5s$-vertices and incident heavy edges;

\item if $d(v)\in \{5,6\}$, then $v$ distributes its remaining charge evenly to its adjacent $5p$-vertices and incident heavy edges.
\end{enumerate}

\item A heavy edge distributes its charge evenly to the two incident faces.

\item Let $f$ be a $5^+$-face. Then $f$ gives $\frac{1}{2}$ to each encountered incident $2$-vertex on a boundary walk of $f$; moreover,
\begin{enumerate}
\item if $d(f)\ge 8$, then $f$ gives $\frac{1}{2}$ to each encountered incident $5p$-, $5s$-, and $6p$-vertex on a boundary walk of $f$. (This means that each cut-vertex on $f$ that is a $5p$-, $5s$-, or $6p$-vertex is given at least $1$ in total.)

\item if $d(f)=7$ and $f\not=F_7$, then $f$ first gives $\frac{1}{2}$ to each incident $5p$-vertex and $6p$-vertex, then distributes its remaining charge evenly to each incident $5s$-vertex (if any exist). If $f=F_7$,  then $f$ gives $\frac{3}{8}$ to each incident $5s$-vertex and $6p$-vertex;

\item if $d(f)=6$ and $f\not=\{F_{6a}, F_{6b}\}$, then $f$ first gives $\frac{1}{2}$ to each incident $5p$-vertex and $\frac{1}{4}$ to each incident $5s$-vertex and $6p$-vertex, then distributes its remaining charge evenly to incident $5s$- and $6p$-vertices (if any exist);  if $f=F_{6a}$, then $f$ gives $\frac{3}{8}$ to each incident $5p$-vertex and  $\frac{1}{4}$ to the incident $6p$-vertex; if $f=F_{6b}$, then $f$ gives $\frac{1}{6}$ to each incident $6p$-vertex;

\item for the case $d(f)=5$, if $f$ is incident with two $2$-vertices, then it distributes  its charge evenly to each incident $5p$-, $5s$-, and $6p$-vertex (if any exist); if $f$ has at most one $2$-vertex and $f\not\in \{F_{5a},F_{5b},F_{5c}, F_{5d}\}$, then $f$ first gives $\frac{1}{4}$ to each incident $5p$-, $5s$-, and $6p$-vertex, then it distributes its remaining charge evenly to each incident $5p$-, $5s$-, and $6p$-vertex (if any exist); if $f\in \{F_{5a}, F_{5b}\}$, then it gives $\frac{1}{2}$ to the incident $6p$-vertex and its remaining charge to the $5s$-vertex; if $f\in \{F_{5c}, F_{5d}\}$, then it gives $\frac{1}{4}$ to each incident $5p$-vertex and $6p$-vertex, and its remaining charge evenly to incident $5s$-vertices.
\end{enumerate}
 \end{enumerate}

\begin{lem}\label{lem-face-charge}
If $f$ is a $5^+$-face, then $\mu^*(f)\geq 0$.
\end{lem}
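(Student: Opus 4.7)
The plan is to fix a $5^+$-face $f$, set $k=d(f)$, and prove
\[
\mu^*(f) = (k-4) + I(f) - O(f)\ \geq\ 0,
\]
where $I(f)\geq 0$ is the inflow from heavy edges via R2 and $O(f)$ is the outflow prescribed by R3. Since $I(f)\geq 0$, I will lower-bound $\mu^*(f)$ while ignoring $I(f)$, and the argument splits on $k$.

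For $k\geq 8$ a crude bound works: the boundary walk has length exactly $k$, and by inspection of R3(a) each encounter receives at most $\tfrac{1}{2}$ from $f$, so $O(f)\leq \tfrac{k}{2}\leq k-4$.

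For $k=7$ I apply Lemma~\ref{lem:7-face}. If $f=F_7$, then (reading from the proof of that lemma) $F_7$ carries one $5s$-vertex, three $6p$-vertices, and three $2$-vertices, and R3(b) distributes $3\cdot\tfrac{1}{2}+4\cdot\tfrac{3}{8}=3=k-4$. Otherwise $f$ has either at most six $2$-/$5p$-/$5s$-/$6p$-vertices, so $O(f)\leq 3$, or at least two $5s$-vertices, in which case R3(b)'s first-pass gifts of $\tfrac{1}{2}$ to the $2$-, $5p$-, and $6p$-vertices total at most $3$ and the non-negative remainder is passed to the $5s$-vertices. For $k=6$ I invoke Lemma~\ref{lem:6face}, splitting on $n_2\in\{0,1,2,3\}$ (the number of $2$-vertices on $f$) and applying R3(c) in each subcase; the special-face clauses are hand-tuned, so the $F_{6a}$ residual $2-2\cdot\tfrac{1}{2}=1$ splits as $\tfrac{3}{8}+\tfrac{3}{8}+\tfrac{1}{4}$ between the two $5p$-vertices and the $6p$-vertex, while the $F_{6b}$ residual $2-3\cdot\tfrac{1}{2}=\tfrac{1}{2}$ splits as $3\cdot\tfrac{1}{6}$ among the three $6p$-vertices. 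In the non-special subcases, Lemma~\ref{lem:6face} bounds the number of $5p$-/$5s$-/$6p$-vertices tightly enough that R3(c)'s $\tfrac{1}{2}$- and $\tfrac{1}{4}$-gifts fit within the residual $2-\tfrac{n_2}{2}$. For $k=5$ I similarly use Lemma~\ref{lem:5-face-1-2vtx}, splitting on $n_2\in\{0,1,2\}$ and running through R3(d), with the special faces $F_{5a},F_{5b},F_{5c},F_{5d}$ handled by their bespoke gifts.

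The main obstacle will be the bookkeeping in the $k=5$ and $k=6$ cases, where the initial charge is small and the fractional gifts on special faces ($\tfrac{1}{6},\tfrac{1}{4},\tfrac{3}{8}$) leave no slack. The tightest verifications occur on $F_{6a}, F_{6b}, F_{5a}$--$F_{5d}$, where the argument must lean on Lemmas~\ref{lem:6face} and~\ref{lem:5-face-1-2vtx} to exclude every configuration that would over-draw the face's residual after it feeds its incident $2$-vertices.
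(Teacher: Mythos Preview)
Your plan is correct and mirrors the paper's own proof almost exactly: the same case split on $d(f)$, the same invocation of Lemmas~\ref{lem:6face}, \ref{lem:5-face-1-2vtx}, and \ref{lem:7-face}, and the same hand-computation of the special faces $F_7$, $F_{6a}$, $F_{6b}$, $F_{5a}$--$F_{5d}$ to confirm their bespoke gifts exhaust the residual exactly. The paper's write-up is terser (for instance, the entire $5$-face case is dispatched in one line), but your outline expands the same reasoning and introduces no new ideas; in particular, your observation that $I(f)\ge 0$ can be safely ignored is implicit in the paper as well.
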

\begin{proof}
We show that each face has nonnegative charge after the required charges by (R3).

If $f$ is a $5$-face, then $\mu(f)=1$ and $f$ is incident with at most two  $2$-vertices.
Clearly, $\mu^*(f)\geq 1-1=0$ by (R3d) and Lemma~\ref{lem:5-face-1-2vtx}.

If $f$ is a $6$-face, then $\mu(f)=2$ and $f$ is incident with at most three $2$-vertices by Lemma~\ref{lem:edge}.

Case 1: $f$ has at most one incident 2-vertex.  By Lemma~\ref{lem:6face} (c) and (d), and (R3c), $\mu^*(f)\geq 2-\max\{\frac{1}{2}\cdot 2+\frac{1}{4}\cdot 4,\frac{1}{4}\cdot6\}= 0$.

Case 2: $f$ has two incident 2-vertices.
 By Lemma~\ref{lem:6face} (b), $f$ has at most two incident $5p$-vertices. If $f$ has no incident $5p$-vertex, then $\mu^*(f)\geq 2-\frac{1}{2}\cdot 2-\frac{1}{4}\cdot 4=0$. If $f$ has one incident $5p$-vertex, then $f$ has at most two of $5s$-vertices and $6p$-vertices by Lemma~\ref{lem:6face} (b), so $\mu^*(f)\geq 2-\frac{1}{2}\cdot 3-\frac{1}{4}\cdot 2=0$. If $f$ has two incident $5p$-vertices, then $f$ is either a special face $F_{6a}$ or has neither $5s$-vertices nor $6p$-vertices.
Therefore, $\mu^*(f)\geq 2-\frac{1}{2}\cdot 2-\frac{3}{8}\cdot 2-\frac{1}{4}=0$ or $\mu^*(f)\geq 2-\frac{1}{2}\cdot 4=0$.

Case 3: $f$ has three incident 2-vertices.
If $f$ is incident with a $5$-vertex, then the other two vertices on $f$ are $7^+$-vertices by Lemma~\ref{lem:6face} (a), so $\mu^*(f)\geq 2-\max\{\frac{1}{2}\cdot 4,\frac{1}{2}\cdot 3+\frac{1}{4}\}=0$.
If $f$ is a special face $F_{6b}$ in Figure~\ref{figure-special-6-face}, then $\mu^*(f)\geq 2-\frac{1}{2}\cdot 3-\frac{1}{6}\cdot 3=0$.
If $f$ is not $F_{6b}$, then $\mu^*(f)\geq 2-\frac{1}{2}\cdot 3-\frac{1}{4}\cdot 2=0$.

If $f$ is a $7$-face, then $\mu(f)=3$. By (R3b) and Lemma~\ref{lem:7-face}, $\mu^*(f)\geq 7-4-\max\{\frac{1}{2}\cdot 6, \frac{1}{2}\cdot 5, \frac{1}{2}\cdot 3+\frac{3}{8}\cdot 4\}=0$.
If $f$ is a $8^+$-face, then $\mu^*(f)\ge d(f)-4-\frac{1}{2}d(f)\ge 0$ by (R3a).
\end{proof}

Now we consider the final charge of an arbitrary vertex.  Note that if a $5p$-, $5s$-, or $6p$-vertex is a cut vertex, then it must be visited more than once on a boundary walk of a $8^+$-face, thus it gets at least $1$ from the face by (R3). Therefore in the following three lemmas, we always assume that a $5p$-vertex or $5s$-vertex is in five different faces and a $6p$-vertex is in six different faces.

\begin{lem}
If $u$ is a $5p$-vertex, then $\mu^*(u)\geq 0$.
\end{lem}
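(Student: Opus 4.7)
Since $u$ is a $5p$-vertex ($d(u)=5$ with exactly one $3^+$-neighbor $v$ and four $2$-neighbors), we have $\mu(u)=1$. By (R1), $u$ transfers $\tfrac12$ to each of its four $2$-neighbors, so the outflow from $u$ is $2$. Thus it suffices to show that $u$ receives at least $1$ in total from $v$ and from the five incident faces (the remark preceding the lemma lets us assume $u$ is on five distinct faces).

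The structural input is Lemma~\ref{vertex-degree}(ii): $v$ is either a $9^+$-vertex or a $6s^+$-vertex, so in particular $d(v)\ge 6$. I would split into two cases on $d(v)$ and in each case track the incoming charge from $v$ (via (R1)) and from the five incident faces (via (R3)), labeling the faces cyclically as $f_1,\dots,f_5$ so that $f_1,f_5$ contain the edge $uv$ and $f_2,f_3,f_4$ each lie between two consecutive $2$-neighbors of $u$. By Lemma~\ref{vertex-degree}(i), the second neighbor $p_i$ of each $2$-neighbor $x_i$ of $u$ is a $6^+$-vertex; this pins down the structure of the inner faces and the possible types on the outer faces.

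\emph{Case 1: $d(v)\ge 7$.} By (R1a) or (R1b), $v$ contributes exactly $\tfrac12$ to $u$. I would then verify that the five incident faces contribute at least $\tfrac12$ to $u$ in total: a $5$-face with two $2$-vertices gives $u$ nothing by (R3d), a $5$-face with one $2$-vertex gives $u$ either $\tfrac14$ or $\tfrac12$ (using Lemma~\ref{lem:5-face-1-2vtx}), and each $6^+$-face gives $u$ at least $\tfrac{3}{8}$ by (R3c), (R3b), or (R3a). A short check over the possible face-type patterns, using that $v$ being $7^+$ leaves room for at least one of $f_1,f_5$ to host a $3^+$ face-neighbor of $v$, yields the desired $\tfrac12$.

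\emph{Case 2: $d(v)=6$.} Then $v$ is a $6s^+$-vertex, and by (R1c) the amount $v$ sends to $u$ is a share of $2-\tfrac12 k$, where $k$ is the number of $v$'s $2$-neighbors (so $k\le 4$, since $v$ has at least two $3^+$-neighbors). When $k\le 3$, this is positive, and combining with the face contributions bounded as in Case~1 finishes the case. The critical subcase is $k=4$: here $v$'s remaining charge is $0$ and $u$ receives nothing from $v$, so the entire incoming $1$ must come from faces. In this subcase, the two $v$-incident faces $f_1,f_5$ each border a $2$-neighbor of $v$, and I would use Lemmas~\ref{vertex-degree}(i) and (iii) applied to those $2$-vertices (together with the girth-$5$ condition) to restrict which of $f_1,\dots,f_5$ can simultaneously be $5$-faces with two $2$-vertices. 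A supplementary source of charge in tight configurations is heavy edges on the inner faces: by (R2) a heavy edge routes half its accumulated charge to each incident face, which then flows to $u$ via (R3d).

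The main obstacle is the extremal subcase of Case~2 ($d(v)=6$, $k=4$), where $v$ gives $u$ nothing and several faces could naively contribute $0$. Closing the gap requires a careful interplay between the degree constraints on $v$'s neighbor $w$ (forced to be $9^+$ or $5s^+$ by Lemma~\ref{vertex-degree}(iii)), the types of the $6^+$-vertices $p_1,\dots,p_4$ bordering $u$'s $2$-neighbors, and the girth-$5$ condition, to rule out the configuration in which every face at $u$ is a $5$-face with two $2$-vertices. The final step is a direct verification that in each permitted face-type profile around $u$, the total incoming face charge is at least $1$.
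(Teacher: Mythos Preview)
Your setup is right and the target (receive at least $1$) is correct, but the case split on $d(v)$ does not isolate the hard configuration, and your Case~1 contains a real gap.

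The critical configuration is when all five faces at $u$ are $5$-faces with two $2$-vertices. This can happen even when $d(v)\ge 7$: the two faces $f_1,f_5$ through $uv$ are then $u\,v\,w_i\,p_i\,x_i$ with $w_i$ a $2$-vertex, and the three inner faces are $u\,x_i\,p_i\,p_{i+1}\,x_{i+1}$. In that situation $v$ gives $u$ only $\tfrac12$, and every face has zero leftover charge of its own, so your ``short check over face-type patterns'' cannot produce the missing $\tfrac12$. Your sentence ``$v$ being $7^+$ leaves room for at least one of $f_1,f_5$ to host a $3^+$ face-neighbor of $v$'' is neither true in general (for $d(v)\ge 9$ the vertex $v$ may have no $3^+$-neighbor besides $u$; and for $d(v)\in\{7,8\}$ the second $3^+$-neighbor need not lie on $f_1$ or $f_5$) nor sufficient (a single $5$-face with one $2$-vertex gives $u$ only $\tfrac14$).

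What actually closes the gap in the paper is a coloring argument you never invoke: fix a $(3,4)$-coloring of $G-u$, deduce that $v$ and all four $p_i$ are $4$-saturated and non-recolorable, and push this further to show that $p_2,p_3$ are $6r^+$- or $9^+$-vertices and $p_1,p_4$ are at least $6s^+$ (with sharper bounds depending on whether $w_1,w_2$ are $2$-vertices). This is what makes $p_1p_2,\,p_2p_3,\,p_3p_4$ heavy edges carrying enough charge so that, via (R2) and (R3d), the inner $5$-faces funnel at least $\tfrac12$ to $u$. The same coloring argument also yields the key inequality $d(v)\ge 6+k$, where $k$ is the number of $w_i$ that are $2$- or $5s$-vertices; this is what forces $d(v)\ge 8$ when both $w_1,w_2$ are $2$-vertices and drives the remaining subcases. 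None of this structure is visible from Lemma~\ref{vertex-degree} alone; you need to work directly with the coloring of $G-u$. As written, your plan would stall in the all-$5$-face configuration of Case~1 (and the analogous subcases of Case~2), so the approach needs to be reorganized around the coloring analysis rather than around $d(v)$.
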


\begin{proof}
By (R1), $u$ gives out $4\cdot \frac{1}{2}=2$ to its adjacent $2$-vertices. To show $\mu^*(u)\ge 0$, we need to prove that $u$ receives at least $1$ by the discharging rules.
Let $N(u)=\{u_0, v_1, v_2, v_3, v_4\}$ where $d(u_0)>2$ and $d(v_i)=2$ for $i\in[4]$.
For $i\in[4]$, let  $u_i$ be the  neighbor of $v_i$ that is not $u$.
We assume that the five faces incident with $u$ are $A,B,C,D,E$ as shown in Figure~\ref{figure-5p}.

\begin{figure}[ht]
\begin{center}
\includegraphics[scale=0.55]{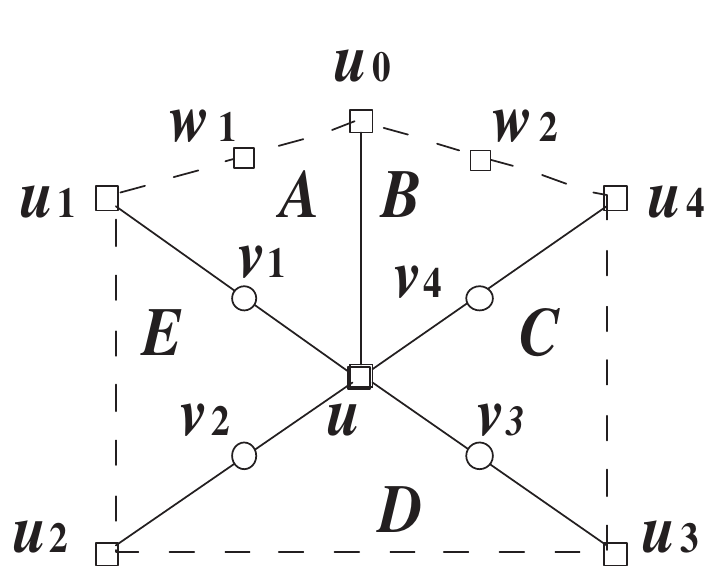}\hskip 1in
\includegraphics[scale=0.5]{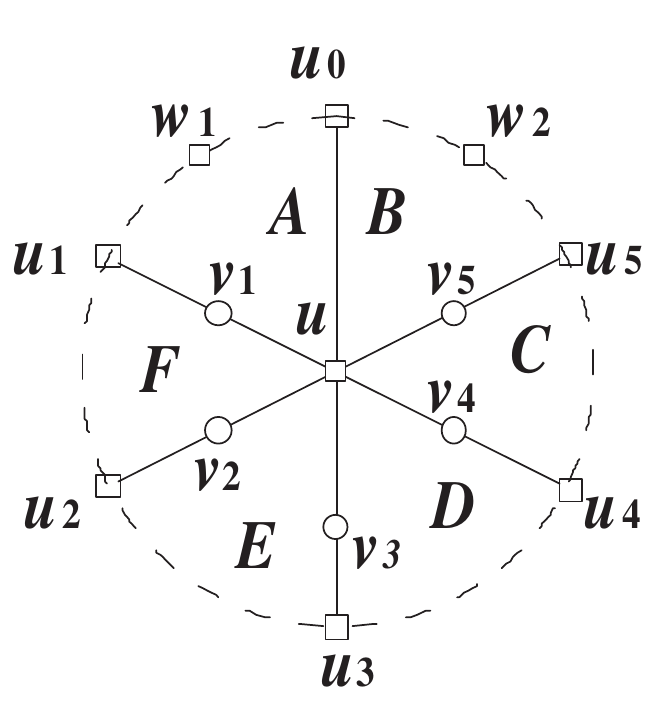}
\caption{A $5p$-vertex $u$ incident with five $5^+$-faces and a $6p$-vertex $u$ incident with six $5^+$-faces.}
\label{figure-5p}
\end{center}
\end{figure}

To get some idea regarding the degrees of the vertices on the five faces incident with $u$, we consider a $(3,4)$-coloring $\varphi$ of $G-u$, which exists since the number of edges decreased and the number of $3^+$-vertices did not increase.
By Lemma~\ref{vertex-degree} $(ii)$, $u_0$ is a $4$-saturated $6^+$-vertex and the four $2$-neighbors of $u$ are colored with the color $3$.
Since $u_0$ is non-recolorable, if $d(u_0)\leq 8$, then $u_0$ has a $3$-saturated neighbor and four neighbors of color $4$.
Furthermore, since no neighbor of $u$ is recolorable, for $i\in[4]$, $u_i$ is a $4$-saturated $6^+$-neighbor and if $d(u_i)\leq 8$, then $u_i$ has a $3$-saturated neighbor.


{\bf Case 1.} $u$ is incident with a special $6$-face $F_{6a}$.

By the ordering of the degrees of the vertices on $F_{6a}$, the special $6$-face must be either $A$ or $B$.
Without loss of generality, assume that $A$ is a special $6$-face $F_{6a}$ so that $u_1$ is a $6p$-vertex and $u_0$ is a $7s^+$-vertex.
As both $u_1$ and $u_2$ are $4$-saturated, and $u_1$ is adjacent to a $3$-saturated vertex, we conclude that $u_1$ cannot be adjacent to $u_2$.
Otherwise, $u_1$ has two $3^+$-neighbors, which implies that $u_1$ is not a poor vertex.
Hence, $E$ is a $6^+$-face.
By (R3), $u$ gets $\frac{1}{2}$ from $E$ and $\frac{3}{8}$ from $A$, and by (R1), $u$ gets $\frac{1}{2}$ from $u_0$.
So $u$ gets at least $1$ in total, as desired.


{\bf Case 2.} $u$ is not incident with a special 6-face and either $A$ or $B$ is a non-special $6^+$-face.

Note that by (R3), $u$ receives at least $\frac{1}{2}$ from each of its incident $6^+$-faces that are not special. So we may assume that $u$ is incident with exactly one $6^+$-face and four $5$-faces.
Without loss of generality, assume that $A$ is a $6^+$-face and let $B=uu_0w_2u_4v_4$. Note that $u_4$ is non-recolorable, which means that it has either a $3$-saturated neighbor or at least four neighbors colored with $3$. Since a $3^+$-neighbor $u_3$ of $u_4$ is $4$-saturated, we know that $u_4$ cannot be a $6p$-vertex.
Therefore, $B$ is not a special $5$-face.

\begin{enumerate}
\item We may assume that $u_0$ is a $6$-vertex. For otherwise, $u$ also gets $\frac{1}{2}$ from $u_0$ by (R1), thus $u$ gets at least $1$ in total.

\item We may assume that $w_2$ is not a $2$-vertex.  For otherwise, as $\varphi(u_0)=\varphi(u_4)=4$, $w_2$ is colored or can be recolored with $3$, and then $u_0$ must be a $7^+$-vertex, which contradicts (1).

\item We may assume that $w_2$ is a $5s$-vertex. For otherwise, none of $u_0,w_2,u_4$ is a $5p$-, $5s$-, or $6p$-vertex, so $u$ receives at least $\frac{1}{2}$ from $B$ by (R3d), thus $u$ get at least $1$ in total.

\item We may assume that each of $u_3$ and $u_2$ is either a $6r^+$-vertex or a $9^+$-vertex, and $u_1$ is either a $6s^+$-vertex or $9^+$-vertex.  For $z\in\{u_3,u_2,u_1\}$, observe that each $z$ must have either a $3$-saturated neighbor (other than $u_i$s) or four neighbors colored with 3 (other than $v_i$s).

\item We may assume that $u_4$ is either a $8^+$-vertex or a $7r$-vertex.
It must be that $\varphi(w_2)=3$, for otherwise, we can recolor $w_2$ with the color $3$ and color $u$ with $4$ to obtain a $(3, 4)$-coloring of $G$, which is a contradiction.
If $d(u_4)\leq 7$, then $u_4$ must have a $3$-saturated neighbor that is not $w_2$, for otherwise, we could recolor $u_0, w_2, u_4$ with $3,4,3$, respectively, and then color $u$ with the color $4$, a contradiction.
This implies that $u_4$ is a $7$-vertex and has at least three $5^+$-neighbors, i.e. $w_2, u_3$ and another $5s^+$- or $9^+$-neighbor, so $u_4$ is a $7r$-vertex.
\end{enumerate}

Now, $u_4u_3, u_3u_2, u_2u_1$ are all heavy edges.
Since each of $u_3$ and $u_2$ has at least two $5^+$-neighbors that are not $5p$-, $5s$- and $6p$-vertices, by (R1), the heavy edges $u_4u_3, u_3u_2, u_2u_1$ get at least $\frac{1}{3}+\frac{1}{6}$, $\frac{1}{6}\cdot 2$,  $\frac{1}{6}$, respectively, from $u_4, u_3, u_2$.
By (R2) and (R3d), $u$ receives at least $\frac{1}{2}(\frac{1}{3}+\frac{1}{6}+\frac{1}{6}\cdot 2+\frac{1}{6})=\frac{1}{2}$ from faces $C,D,E$, and thus a total of $1$, as desired.

{\bf Case 3.} $u$ is not incident with a special $6$-face and both $A$ and $B$ are $5$-faces.

Let $A=uu_0w_1u_1v_1$ and $B=uu_0w_2u_4v_4$.

\begin{enumerate}
\item If $k$ is the number of vertices in $\{w_1, w_2\}$ that is either a $2$-vertex or a $5s$-vertex, then $d(u_0)\geq 6+k$.  This is because if $w_i$ is either a $2$-vertex or a $5s$-vertex, then $\varphi(w_i)=3$, otherwise we can recolor $w_i$ with $3$ and color $u$ with $4$ to obtain a $(3, 4)$-coloring of $G$, which is a contradiction. The lower bound on $d(u_0)$ follows since $u_0$ is $4$-saturated and cannot be recolored with the color 3.

\item We may assume that $C,D,E$ are $5$-faces. For otherwise, $u$ gets at least $\frac{1}{2}$ from an incident $6^+$-face by (R3). Now, if $d(u_0)\ge 7$, then $u$ gets another $\frac{1}{2}$ from $u_0$ by (R1), for a total of $1$.  If $d(u_0)= 6$, then each of $w_1$ and $w_2$ is neither $2$-vertex nor $5s$-vertex by (1), and thus each of $u_0,w_1,w_2$ is not a $2/5p/5s/6p$-vertex.
By (R3d), $u$ gets at least $\frac{1}{4}\cdot 2$ from $A$ and $B$, for a total of $1$.

\item We observe each of $u_3$ and $u_2$ is either a $9^+$-vertex or a $6r^+$-vertex.  This follows from the fact that each of  $u_3$ and $u_2$ is $4$-saturated, has two $4$-saturated neighbors, and is not recolorable with $3$ (which implies either a $3$-saturated neighbor or at least four 3-colored neighbors other than $v_2$ and $v_3$).

\item Assume $d(u_1), d(u_4)\leq 8$. For $i\in[2]$, $u_{3i-2}$ is a $7s^+$-vertex if $d(w_i)=2$ and is a $6s^+$-vertex if $d(w_i)\geq 3$.
\end{enumerate}

Now, $u_1u_2, u_2u_3, u_3u_4$ are all heavy edges.  By (R1), $u_2$ sends at least $\min\left\{\frac{6-4-3\cdot\frac{1}{2}}{3}, \frac{7-4-5\cdot \frac{1}{2}}{2}, \frac{1}{2}\right\}= \frac{1}{6}$ to each of $u_1u_2$ and $u_2u_3$, and likewise, $u_3$ sends at least $\frac{1}{6}$ to each of $u_2u_3$ and $u_3u_4$.
\begin{itemize}
\item Assume that both $w_1$ and $w_2$ are $2$-vertices.
Now, $u_0$ is an $8^+$-vertex and gives $\frac{1}{2}$ to $u$ by (R1a).
Also, $u_1$ ($u_4$, respectively) is a $7s^+$-vertex and gives at least $\frac{7-4-5\cdot\frac{1}{2}}{2}=\frac{1}{4}$ to the heavy edge $u_1u_2$ ($u_3u_4$, respectively) by (R1). By (R2) and (R3), $C,D,E$ give at least $\frac{1}{2}(\frac{1}{6}\cdot 4+\frac{1}{4}\cdot 2)>\frac{1}{2}$ to $u$.

\item Without loss of generality, assume that $w_1$ is a $2$-vertex and $w_2$ is a $3^+$-vertex.   Now, $u_0$ is a $7^+$-vertex and gives $\frac{1}{2}$ to $u$ by (R1), and the $5$-face $B$ gives at least $\frac{1}{4}$ to $u$ by (R3).  Then $u$ gets at least $\frac{1}{2}+\frac{1}{4}$ from $u_0$ and $B$, and at least $\frac{1}{2}(\frac{1}{6}\cdot 4)>\frac{1}{4}$ from $C,D,E$ by (R3).

\item Finally, assume that neither of $w_1,w_2$ is a $2$-vertex. By (R3d), each of $A,B$ gives at least $\frac{1}{4}$ to $u$. Furthermore, each of $A,B$ gives at least $\frac{1}{2}$ to $u$ if neither $w_1$ nor $w_2$ is a $5s$-vertex. (In the case, each of $A,B$ cannot contain any $2/5p/5s/6p$-vertex other than $u,v_1,v_4$ because each of $u_0,u_1,u_4$ is $4$-saturated, non-recolorable and has at least two $3^+$-neighbors.)   Now if either  $w_1$ or $w_2$ is a $5s$-vertex, then $u_0$ is a $7^+$-vertex, and thus $u_0$ gives $\frac{1}{2}$ to $u$ so $u$ gets a total of $\frac{1}{4}\cdot2+\frac{1}{2}\geq 1$.
\end{itemize}
Hence, $u$ always gets at least $1$, as desired.
\end{proof}

\begin{lem}
If $u$ is a $6p$-vertex, then $\mu^*(u)\geq 0$.
\end{lem}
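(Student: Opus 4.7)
The initial charge is $\mu(u)=2$, and by (R1) $u$ sends $\frac{1}{2}$ to each of its five $2$-neighbors, leaving a deficit of $\frac{1}{2}$. My plan is to recover this $\frac{1}{2}$ from the unique $3^+$-neighbor $u_0$ of $u$, the six incident faces, and charge funnelled into those faces by heavy edges via (R2). Applying Lemma~\ref{vertex-degree}(iii) to $u$, and noting that a $2$-neighbor of $u$ cannot be saturated, the non-recolorable saturated neighbor of $u$ must be $u_0$; hence $u_0$ is either a $9^+$-vertex or a $5s^+$-vertex. If $d(u_0)\geq 7$, then (R1a) for $d(u_0)\geq 8$, or (R1b) for $d(u_0)=7$, sends $\frac{1}{2}$ directly from $u_0$ to $u$, settling this case.

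Otherwise $u_0\in\{5r,5s,6r,6s\}$ and (R1c) sends nothing to $u$, so the entire $\frac{1}{2}$ deficit must come from the six faces. Label them cyclically as $A,B,C,D,E,F$ with $A,F$ incident to $u_0$; the four \emph{middle} faces $B,C,D,E$ each contain $u$ together with two consecutive $2$-neighbors of $u$. A reading of (R3) shows that a $6^+$-face contributes at least $\frac{1}{6}$ to $u$ (namely $\frac{1}{6}$ for $F_{6b}$, $\frac{1}{4}$ for $F_{6a}$ or for other non-special $6$-faces, $\frac{3}{8}$ for $F_7$, $\frac{1}{2}$ for non-special $7$-faces, and $\frac{1}{2}$ for $8^+$-faces), and by (R3d) a $5$-face with at most one $2$-vertex contributes at least $\frac{1}{4}$ to $u$. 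The dangerous face type is a $5$-face with two $2$-vertices: its entire $\mu(f)=1$ is consumed by the two $2$-vertices, contributing $0$ to $u$ directly unless one of its edges is a heavy edge, in which case (R2) deposits extra charge. Every middle $5$-face automatically has two $2$-vertices, so the proof reduces to showing that either enough incident faces are $6^+$-faces (or $5$-faces with at most one $2$-vertex), or the relevant middle edges $u_iu_{i+1}$ are heavy enough to cover the gap via (R2). When $u$ lies on an $F_{6b}$, Lemma~\ref{lem:F2} rules out adjacent $5$-faces with two $2$-vertices, further restricting the admissible configurations.

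The main obstacle is the extremal subcase in which all four middle faces are $5$-faces with two $2$-vertices, both side faces are small, and $u_0\in\{5s,6s\}$. To handle this I would adapt the coloring arguments from Lemmas~\ref{lem:F2} and~\ref{lem:5-face-1-2vtx}: for a $2$-neighbor $v_i$ of $u$, take a $(3,4)$-coloring of $G-v_i$, apply Lemma~\ref{vertex-degree}(i) to force $u_i$ to be a $4$-saturated $6^+$-vertex with a prescribed non-recolorable structure, and then cross-reference the constraints simultaneously at all the $u_i$ and at $u_0$. This should either yield an explicit $(3,4)$-coloring of $G$ (contradicting minimality), or force each $u_i$ to be rich enough ($5r$/$6r$ or $7^+$) that every edge $u_iu_{i+1}$ is a heavy edge. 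In the heavy-edge regime, (R1) assigns enough charge to each such edge that (R2) feeds at least $\frac{1}{8}$ into the adjacent middle face; summed over the four middle faces this comfortably covers the $\frac{1}{2}$ deficit, yielding $\mu^*(u)\ge 0$.
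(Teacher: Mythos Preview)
Your first two paragraphs are fine and match the paper: the reduction to $d(u_0)\le 6$, the observation that (R1c) sends nothing from such a $u_0$ to a $6p$-vertex, and the lower bounds $\frac{1}{6}$ (for $6^+$-faces) and $\frac{1}{4}$ (for $5$-faces with at most one $2$-vertex) are all correct. The treatment of $F_{6b}$ via Lemma~\ref{lem:F2} is also the right idea.

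The gap is in paragraph~3. Your plan hinges on making every middle edge $u_iu_{i+1}$ a heavy edge, but this is not forced by the lemmas you cite. When you delete a $2$-vertex $v_i$ and apply Lemma~\ref{vertex-degree}$(i)$, you only know that \emph{one} of $u,u_i$ is the $4$-saturated $6^+$-neighbor; since $u$ itself is a $6$-vertex, it can absorb that role, leaving $u_i$ merely $3$-saturated and possibly a $5s$-vertex. Concretely, in a $(3,4)$-coloring of $G-u$ with $u_0$ $3$-saturated and all $v_j$ colored $4$, each $u_j$ is $3$-saturated and non-recolorable, but nothing prevents some $u_j$ from being a $5s$-vertex (two $3^+$-neighbors $u_{j-1},u_{j+1}$ and three $2$-neighbors). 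In that case the edges $u_{j-1}u_j$ and $u_ju_{j+1}$ are \emph{not} heavy, (R2) deposits nothing into the adjacent middle faces, and your $\frac{1}{8}$-per-face count collapses. The phrases ``should either yield'' and ``force each $u_i$ to be rich enough'' are exactly where a real argument is needed and none is given.

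The paper's proof avoids heavy edges entirely in the $6p$ case. Instead it works with a single coloring of $G-u$ and focuses on the two \emph{side} faces (your $A$ and $F$, the ones containing $u_0$). A short recoloring argument shows that at most one of them can be a $5$-face with two $2$-vertices when $d(u_0)\le 6$; the other, call it $B$, therefore contributes at least $\frac{1}{4}$. The bulk of the proof then pins down $B$ further: it eliminates $F_7$, eliminates $5$-faces for $B$, shows $B$ must be a $6$-face, and finally shows $B$ carries at most two $5p/5s/6p$-vertices, so (R3c) makes $B$ alone give $u$ the full $\frac{1}{2}$. All of this is done by repeatedly exploiting that $u_0$ is saturated and non-recolorable with $d(u_0)\le 6$, which tightly constrains the colors of the $2$-vertices $w_1,w_2$ adjacent to $u_0$ on $A$ and $B$. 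Your middle-face heavy-edge route does not tap this constraint on $u_0$, and that is the missing idea.
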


\begin{proof}
The initial charge of $u$ is $2$, and by (R1c), $u$ gives out $\frac{1}{2}\cdot 5$ to its $2$-neighbors.
To show $\mu^*(u)\ge 0$, we need to prove that $u$ receives $\frac{1}{2}$ by the discharging rules.

Let $N(u)=\{u_0, v_i: i\in[5]\}$ where  $d(u_0)>2$ and $d(v_i)=2$ for $i\in [5]$. For $i\in [5]$, let $u_i$ be the neighbor of $v_i$ that is not $u$. We assume that the six faces incident with $u$ are $A, B, C, D, E, F$ as shown in Figure~\ref{figure-5p}.

Since $G-u$ is a graph with fewer edges than $G$ and the number of $3^+$-vertices did not increase, there exists a $(3,4)$-coloring $\varphi$ of $G-u$.
By Lemma~\ref{vertex-degree}, either $\varphi(v_i)=4$ for $i\in[5]$ and $u_0$ is $3$-saturated and non-recolorable, or at least four of $v_i$'s are colored with $3$ and $u_0$ is $4$-saturated and non-recolorable.
In the former case, $u_i$ with $i\in[5]$ are $3$-saturated and non-recolorable, and in the latter case, at least four of the $u_i$'s are $4$-saturated and non-recolorable.

\begin{enumerate}

\item\label{i-u0} We may assume that $d(u_0)\le 6$.   By (R1), $u$ gets $\frac{1}{2}$ from $u_0$ if $d(u_0)\ge 7$.

\item Also, we may assume that $u$ is not incident with a special face $F_{6b}$.

\noindent
If $u$ is incident with $F_{6b}$, then by Lemma~\ref{lem:F2}, $u$ is also incident with two other faces where each face is not a $5$-face with two $2$-vertices.
By (R3), each face that is not a 5-face with two $2$-vertices sends at least $\frac{1}{6}$ to $u$, plus $F_{6b}$ sends $\frac{1}{6}$ to $u$.
Thus, $u$ gets a total of at least $\frac{1}{2}$.

\item We may assume that $A$ is a $5$-face with two $2$-vertices.

\noindent
By (R3), each face that is neither a 5-face with two 2-vertices nor $F_{6b}$ gives at least $\frac{1}{4}$ to $u$, so we may assume that one of $A$ and $B$, say $A$, must be a $5$-face with two $2$-vertices.

\item $B$ is not a $5$-face with two $2$-vertices, and furthermore we may assume that $C,D,E,F$ are $5$-faces with two 2-vertices.

\noindent
Suppose that $B$ is a $5$-face with two $2$-vertices, so that both $w_1$ and $w_2$ are $2$-vertices.
If $u_0$ is 3-saturated, then both $u_1$ and $u_5$ are $3$-saturated. So $w_1,w_2$ are colored or can be recolored with $4$. This implies that $u_0$ is a $7^+$-vertex, which contradicts~(\ref{i-u0}).
If $u_0$ is $4$-saturated, then either $u_1$ or $u_5$ is $4$-saturated.
Without loss of generality assume that $u_1$ is $4$-saturated, so either $\varphi(w_1)=3$ or $w_1$ can be recolored with $3$. This implies that $u_0$ is a $7^+$-vertex, which contradicts~(\ref{i-u0}).

Now $u$ receives at least $\frac{1}{4}$ from $B$ by (R3). We may assume that $C,D,E,F$ are 5-faces with two 2-vertices, for otherwise, $u$ receives another $\frac{1}{4}$ to get a total of at least $\frac{1}{2}$.

\item $B$ is not a special face $F_7$.

\noindent Without loss of generality, assume that $B$ is a special face $F_7$, which sends $\frac{3}{8}$ to $u$.  This implies that $u_0$ is a $5s$-vertex, which further implies that $u_0$ is 3-saturated and $u_i$ is 3-saturated for each $i\in[5]$. Note that $A$ is a $5$-face with two $2$-vertices. Since $u_0$ is non-recolorable, it has a $4$-saturated ($6s^+$- or $9^+$-)neighbor, which means that $\varphi(w_1)=3$. Recolor $w_1$ with $4$ and then color $u$ with $3$, we obtain a $(3, 4)$-coloring of $G$, a contradiction.


\item\label{i-5Fab} We may assume that $B$ is a $6^-$-face. Moreover, if $B$ is a 5-face, then it can be neither $F_{5a}$ nor $F_{5b}$.
 Otherwise, $u$ receives at least $\frac{1}{2}$ by (R3a), (R3b), (R3d).

\item\label{i-6face} $B$ must be a $6$-face.

\noindent
Suppose otherwise. From above, assume that $B$ is a 5-face with at most one 2-vertex.
Note that $B$ must have exactly one 2-vertex since $v_5$ is a 2-vertex.
By (R3), $B$ gives $u$ at least $\frac{1}{2}$ if $u$ is the only $5p$-, $5s$-, or $6p$-vertex on $B$. So consider the case when $B$ is a $5$-face with one $2$-vertex $v_5$ and at least two $5p$-, $5s$-, or $6p$-vertices.
Note that none of $u_0,w_2,u_5$ can be a $6p$- or $5p$-vertex.

Assume that $u_0$ is $3$-saturated.  Then $\varphi(v_i)=4$ and $u_i$ is $3$-saturated for $i\in[5]$.  The 2-vertex $w_1$ is colored or can be recolored with $4$. Therefore $u_0$ is a $6s^+$-vertex.
Thus, either $u_5$ or $w_2$ is a $5s$-vertex.
Since $B$ is not $F_{5a}$ or $F_{5b}$ by (\ref{i-5Fab}), when one of $u_5$ and $w_2$ is a $5s$-vertex, the other one is a $6^-$-vertex.
Now if $w_2$ is a $5s$-vertex, then $w_2$ is colored or can be recolored with $4$ without making $w_2$ $4$-saturated, so $u_0$ must have another $4$-saturated neighbor. Thus, $d(u_0)\ge 7$, which contradicts (\ref{i-u0}).
If $u_5$ is a $5s$-vertex, then $w_2$ must be the $4$-saturated neighbor of $u_5$ and $u_0$.
Thus, we can recolor $u_0,u_5$ with $4$ and $w_2$ with $3$, and color $u$ with $3$ to obtain a $(3, 4)$-coloring of $G$, which is a contradiction.

Assume that $u_0$ is $4$-saturated.  Then $u_0$ is a $6s^+$-vertex. Now, since $d(u_0)\leq 6$, the $2$-vertex $w_1$ cannot be colored or recolored with $3$. This implies that $\varphi(w_1)=4$ and $\varphi(u_1)=3$, and moreover, $\varphi(v_1)=4$.
Furthermore, for $i\in[5]-\{1\}$, $\varphi(v_i)=3$ and $u_i$ is $4$-saturated. Since $u_5$ is $4$-saturated, it is a $6s^+$-vertex.
So $w_2$ is a $5s$-vertex, and $\varphi(w_2)=3$ or $w_2$ can be recolored with $3$. Again, since $B$ is neither $F_{5a}$ nor $F_{5b}$, we know $d(u_5)\le 6$. Then $w_2$ is the only $3$-saturated neighbor of $u_0$ and $u_5$. So by recoloring $u_0,w_2, u_5$ with $3,4,3$, and coloring $u$ with $4$, we obtain a $(3, 4)$-coloring of $G$, which is a contradiction.

From now on, denote $B=uu_0w_2w_2'u_5v_5$.
\item\label{i-2vx} Either $w_2$ or $w_2'$ is a $2$-vertex.

\noindent
For otherwise, $B$ contains exactly one $2$-vertex $v_5$. Moreover, the only $6p^-$-vertex that $B$ contains is $u$. We may assume that $B$ contains at least three $5s$-vertices, for otherwise $u$ gets at least $\frac{1}{3}\cdot( 6-4-\frac{1}{2})=\frac{1}{2}$ from $B$ by (R3c). Since no $5s$-vertex can be adjacent to two $5s$-vertices, by Lemma~\ref{vertex-degree}, we know either $w_2$ or $w_2'$ is not a $5s$-vertex, and both $u_0$ and $u_5$ are $5s$-vertices.
Now, both $u_0$ and $u_5$ cannot be $4$-saturated, thus they are both $3$-saturated. Moreover, $\varphi(v_i)=4$ and $\varphi(u_i)=3$ for $i\in [5]$.  Now we can recolor $w_1$ with $4$ and color $u$ with $3$ to obtain a $(3, 4)$-coloring of $G$, which is a contradiction.

\item $u_0$ cannot be a $5p/5s/6p$-vertex.

As $u$ is a $6p$-vertex, $u_0$ must be a $5s^+$- or $9^+$-vertex by Lemma~\ref{vertex-degree} (iii). If $u_0$ is a $5s$-vertex, then $u_0$ is $3$-saturated,  thus $\varphi(v_i)=4$ and $\varphi(u_i)=3$ for each $i\in [5]$. This means that the $2$-vertex $w_1$ is colored or can be recolored with $4$. Since $u_0$ is non-recolorable, it has a $4$-saturated neighbor and thus $u_0$ is a $6^+$-vertex, a contradiction.





\item $u_5$ cannot be a $5p$- or $6p$-vertex.

Suppose otherwise.  Then $w_2'$ is a $2$-vertex and $u_4$ is the unique $3^+$-neighbor of $u_5$.  Since $G-v_5$ has fewer edges than $G$, it has a $(3,4)$-coloring $\phi$.  Then $u_5$ and $u$ are respectively $3$- and $4$-saturated and both are non-recolorable.

First let $u_5$ be $3$-saturated. Then $u_4$ is the unique $4$-saturated neighbor of $u_5$. So $v_4$ can be recolored $3$. But $\phi(v_i)=\phi(u_0)=4$ for $i\in [3]$, to make $u$ $4$-saturated. So we can recolor $u$ with $3$, a contradiction.

Now assume that $u_5$ is $4$-saturated, which means that $u_5$ is a $6p$-vertex. Then $u_4$ is the unique $3$-saturated neighbor of $u_5$. Now $v_4$ can be recolored with $4$. So $u_0$ is $4$-saturated and $v_i$ for $i\in [3]$ is colored with $3$, which implies that $u_i$ for $i\in [3]$ is colored with $4$.  Then $w_1$ can be recolored with $3$. On the other hand, since $u_5$ is a $4$-saturated $6p$-vertex, $w_2'$ must be a $2$-vertex colored with $4$. Thus $w_2$ must be colored with $3$. (For otherwise, $w_2'$ can be recolored with $3$, a contradiction.) As $u_0$ is $4$-saturated, $d(u_0)\ge 7$, a contradiction.


\item If $u_5$ is a $5s$-vertex, then $u_5$ and $u$ are the only $5p/5s/6p$-vertices on $B$.

Let $u_5$ be a $5s$-vertex.  Again $G-v_5$ has a $(3,4)$-coloring $\phi$, in which $u_5$ is $3$-saturated and $u$ is $4$-saturated and both are non-recolorable, by Lemma~\ref{vertex-degree}. So $\phi(v_i)=4$ for $i\in [4]$ and $u_0$ is $3$-saturated.  Then $\phi(u_i)=3$ for $i\in [4]$. So the $2$-vertex $w_1$ can be recolored with $4$.

If $w_2'$ is a $2$-vertex, then $\phi(w_2')=3$ and $w_2$ must be $4$-saturated.  In this case, $w_2$ is the only $4$-saturated neighbor of $u_0$ (note that $d(u_0)\le 6$), so $w_2$ cannot be a $6p$- or $5s$-vertex.  So $u_5$ and $u$ are the only $5p/5s/6p$-vertices on $B$, as desired.

So $w_2$ is a $2$-vertex and $w_2'$ is the $4$-saturated neighbor of $u_5$. We claim $\phi(w_2)=3$, for otherwise, $u_0$ has to be a $7^+$-vertex to be $3$-saturated and non-recolorable.  Now $w_2'$ is non-recolorable and $4$-saturated, it must be a $6s^+$-vertex or an $8^+$-vertex. So again, $u_5$ and $u$ are the only $5p/5s/6p$-vertices on $B$, as desired.

\end{enumerate}

By (9)-(11), $B$ contains at most two $5p/5s/6p$-vertices, thus $u$ receives at least $\frac{1}{2}$ from $B$ by (R3c), as desired.
\end{proof}

\begin{lem}
If $u$ is a $5s$-vertex, then $\mu^*(u)\geq 0$.
\end{lem}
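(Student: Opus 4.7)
The plan is to track the charge arithmetic for $u$ and show that $u$ receives at least $\frac{1}{2}$ from its neighbors and incident faces combined. Since $\mu(u)=d(u)-4=1$ and $u$ sends $\frac{1}{2}$ to each of its three $2$-neighbors by (R1), the target is exactly $\frac{1}{2}$. Write $N(u)=\{u_0,u_1,v_1,v_2,v_3\}$ where $u_0,u_1$ are the two $3^+$-neighbors (both $5^+$-vertices by Lemmas~\ref{lem:edge} and~\ref{lem:3vx}) and $v_1,v_2,v_3$ are the $2$-neighbors. The first step is to apply Lemma~\ref{vertex-degree} $(ii)$: in any $(3,4)$-coloring of $G-u$, at least one of $u_0,u_1$, say $u_0$, is $4$-saturated and is either a $9^+$-vertex or a $6s^+$-vertex.

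I would then case-split on the degree of $u_0$. If $u_0$ is an $8^+$-vertex, then (R1a) hands $\frac{1}{2}$ directly to $u$ and we are done. If $u_0$ is a $7$-vertex with at least two $3^+$-neighbors, I would use (R1b) to compute $u_0$'s contribution to $u$: after paying $\frac{1}{2}$ to each $2$-neighbor and $\frac{1}{2}$ to each adjacent $5p$- or $6p$-vertex, the remainder is split evenly among $5s$-neighbors and incident heavy edges, which in most configurations is already enough to reach $\frac{1}{2}$ for $u$, leaving a short list of tight sub-cases. The hardest case is when $u_0$ is a $6s^+$-vertex of degree $6$; then (R1c) sends nothing to $u$ and the needed $\frac{1}{2}$ must be produced by the five faces incident to $u$, possibly with help from $u_1$ if $u_1$ is $7^+$ or $8^+$.

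For this main case the plan is to enumerate the cyclic arrangement of $u_0,u_1,v_1,v_2,v_3$ around $u$. By Lemma~\ref{lem:edge}, any $5$-face sharing $u$ with two of its $2$-neighbors has exactly two $2$-vertices, and by (R3d) such a face contributes $0$ to $u$; there are at most two such faces. For every other incident face of $u$ I would read off a lower bound on its contribution from (R3) together with the structural lemmas: Lemma~\ref{lem:5-face-1-2vtx} controls $5$-faces with a single $2$-vertex (at most two of $5p$-, $5s$-, $6p$-vertices in the non-special cases, so (R3d) gives at least $\frac{1}{4}$ per face), Lemma~\ref{lem:6face} controls $6$-faces via (R3c), and Lemma~\ref{lem:7-face} together with (R3b) controls $7$-faces. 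Summing these contributions should hit the target $\frac{1}{2}$ outside of a short list of extremal configurations.

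The hard part, as in the proofs for $5p$- and $6p$-vertices above, is closing out these extremal configurations. For each one I would fix a $(3,4)$-coloring $\varphi$ of $G-v_i$ for a carefully chosen $2$-neighbor $v_i$ of $u$, and use repeated applications of Lemma~\ref{vertex-degree} to pin down which of $u$'s neighbors are $3$- or $4$-saturated and with which colors. From non-recolorability of each saturated neighbor I would then either (i) exhibit a recoloring that extends $\varphi$ to a $(3,4)$-coloring of $G$, contradicting the minimality of $G$, or (ii) force an additional structural feature such as a higher-degree neighbor, a heavy edge incident to $u_0$ or $u_1$, or a special face ($F_7$, $F_{6a}$, $F_{6b}$, $F_{5a}$, $F_{5b}$, $F_{5c}$, or $F_{5d}$) that delivers the missing charge through (R1) or (R3). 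This propagation mirrors the argument used in the $6p$-vertex lemma, so I expect the casework to be parallel in spirit but combinatorially distinct, because the saturation pattern around a $5s$-vertex is constrained differently from that around a $6p$-vertex.
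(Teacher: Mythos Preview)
Your outline has the right overall shape, but two of your organizing choices diverge from the paper in ways that would make the casework harder to close.

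First, the paper does not colour $G-u$; it deletes a $2$-neighbour $v_2$ and applies Lemma~\ref{vertex-degree}~$(i)$. This yields that $u$ itself is $3$-saturated and its opposite neighbour $u_2$ is $4$-saturated, \emph{both} non-recolorable, which immediately pins the colours of $v_1,v_3,y$ and forces $x$ to be $4$-saturated. Your use of Lemma~\ref{vertex-degree}~$(ii)$ on $G-u$ only locates one $4$-saturated $6s^+$/$9^+$-neighbour of $u$, which is strictly less information and will not drive the later recolouring contradictions without switching to $G-v_i$ anyway.

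Second, the paper's primary dichotomy is the \emph{embedding}: whether the two $3^+$-neighbours $x,y$ are consecutive around $u$ (i.e.\ whether $x,y,u$ share a face). These are the two configurations in Figure~\ref{figure-5s}, and the face arithmetic differs fundamentally between them. You only invoke this enumeration inside the $d(u_0)=6$ subcase, but in the paper it governs the whole proof after the easy reduction to $d(x),d(y)\le 7$. Relatedly, your claim that $d(u_0)=7$ is ``mostly enough'' via (R1b) is too optimistic: a $7s$-vertex $u_0$ with five $2$-neighbours and one $5p$- or $6p$-neighbour besides $u$ has $7-4-\tfrac{5}{2}-\tfrac{1}{2}=0$ left for $u$ under (R1b), so $7$-vertices cannot be disposed of before the face analysis. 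The paper keeps $d(x),d(y)\le 7$ inside the embedding cases and repeatedly exploits heavy edges $u_iu_j$ together with (R2) to push charge through the ``two-$2$-vertex'' $5$-faces that you correctly identify as contributing $0$ directly.
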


\begin{proof}
The initial charge of $u$ is $1$, and by (R1c), $u$ gives out $\frac{1}{2}\cdot 3$ to its $2$-neighbors.
To show $\mu^*(u)\ge 0$, we need to prove that $u$ receives $\frac{1}{2}$ by the discharging rules.

Let $N(u)=\{x,y,v_1,v_2,v_3\}$ with $d(x),d(y)>2$ and $d(v_i)=2$ and let $u_i$ be the other neighbor of $v_i$ for $i\in[3]$. Depending on whether $x,y,u$ are on the same face or not, we could have two different embeddings around $u$ (see Figure~\ref{figure-5s}).

\begin{figure}[ht]
\begin{center}
\includegraphics[scale=0.45]{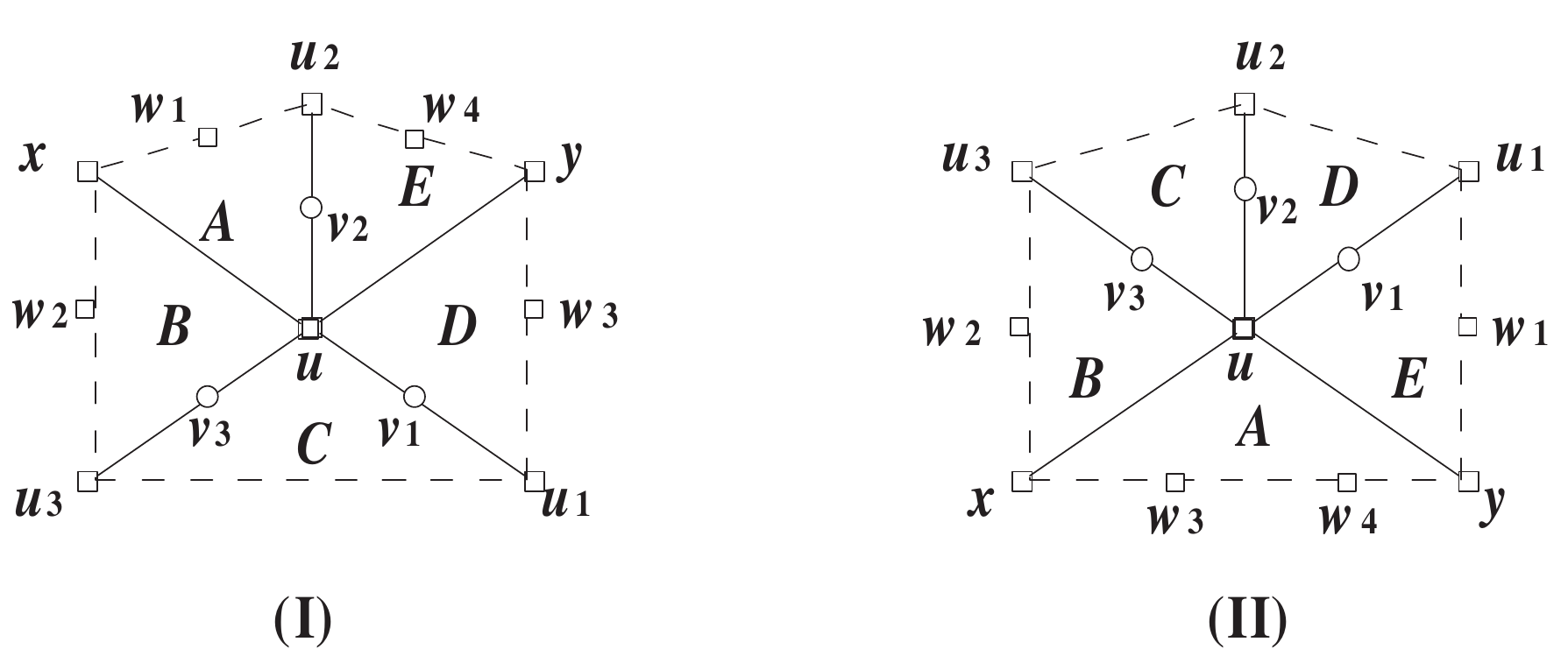}
\caption{Two possible embeddings containing $5s$-vertex $u$ with five $5$-faces.}
\label{figure-5s}
\end{center}
\end{figure}

Since $G-v_2$ is a graph with fewer edges than $G$ and the number of $3^+$-vertices did not increase, there exists a $(3,4)$-coloring $\varphi$ of $G-v_2$.
By Lemma~\ref{vertex-degree},  $u$ is $3$-saturated  and $u_2$ is $4$-saturated, and both are non-recolorable. Without loss of generality, we may assume that $x$ is $4$-saturated and $\varphi(y)=\varphi(v_1)=\varphi(v_3)=3$. Also, $u_1$ and $u_3$ are $4$-saturated and non-recolorable.


We may assume $d(x),d(y)\le 7$, for otherwise, $u$ gets at least $\frac{1}{2}$ by (R1a).
 Moreover, by Lemma~\ref{lem:7-face} and (R3), $u$ receives at least $\frac{1}{4}$ from each incident $6^+$-face, so we assume that $u$ is incident with at most one $6^+$-face.


{\bf Case 1.} $x,y,u$ are not on the same face (see Figure~\ref{figure-5s} (I) for an illustration). We let $w_1, w_2$ be the neighbors of $x$ on $A$ and $B$, respectively.
\begin{enumerate}
\item $u$ is not incident with a special $5$-face $F_{5b}$ or $F_{5c}$.  This follows from the fact that $u$ is adjacent to a 2-vertex on each incident face.

\item We may assume that neither of $B,D$ is a special $5$-face $F_{5a}$ or $F_{5d}$. It follows that neither of $B,D$ is a special $5$-face.

By symmetry, let $B$ be a special 5-face $F_{5a}$ or $F_{5d}$. Then $x$ is a $7s^+$-vertex, $w_2$ is a $5s^+$-vertex, and $u_3$ is a $6p$-vertex.  So $u_1u_3\not\in E(G)$, and $C$ must be a $6^+$-face.  By (R1), $u$ receives at least $\frac{1}{4}$ from $x$, and by (R3), $u$ receives at least $\frac{1}{4}$ from $C$.

\item We may assume that $d(w_1)>2$ and $d(w_2)=2$.

Suppose otherwise that $d(w_1)=2$. When $A$ is a $6^+$-face, $u$ receives at least $\frac{1}{4}$ from $A$ by Lemma~\ref{lem:7-face} and (R3). Moreover, $B$ must be a $5$-face with two $2$-vertices, for otherwise, $u$ receives another $\frac{1}{4}$ from $B$ by (2) and (R3d). So $w_2$ is a $2$-vertex and is colored or can be recolored with $3$. Since $x$ is $4$-saturated and non-recolorable, $x$ must be a $7s^+$-vertex with another $3$-saturated ($5s^+$- or $9^+$-)neighbor other than $u$. Thus $u$ receives at least $\frac{1}{4}$ from $x$ by (R1), and then $\frac{1}{2}$ in total. When $A$ is a $5$-face, $w_1$ is colored or can be recolored with $3$. Again, $x$ is a $7s^+$-vertex and gives $u$ at least $\frac{1}{4}$. Also, we may assume that $B$ is a $5$-face with two $2$-vertices, for otherwise, $u$ can receive another $\frac{1}{4}$ from $B$ by (2) and (R3). So $w_2$ is colored or can be recolored with $3$. This means that $x$ is an $8^+$-vertex, a contradiction.  Therefore, we assume that $d(w_1)>2$.

Now suppose that $d(w_2)>2$. By (R3), $u$ receives at least $\frac{1}{4}$ from $B$, since $B$ is a $6^+$-face or a $5$-face with only one $2$-vertex. Since $d(w_1)>2$, we may assume that $A$ is a special $5$-face, for otherwise, $u$ can receive another $\frac{1}{4}$ from $A$ by (R3). By (1), $A$ is an $F_{5a}$ or $F_{5d}$. Clearly, $x$ is a $7s^+$-vertex with at least two $5s^+$-neighbors. So $u$ receives $\frac{1}{4}$ from $x$ by (R1) as well. Thus we assume that $d(w_2)=2$.

\item We may assume that $A$ is a special $5$-face and $B,C,D,E$ are $5$-faces.

If $A$ is not a special $5$-face, then $u$ can receive at least $\frac{1}{4}$ from $A$ by (R3) since $A$ is a $6^+$-face or a $5$-face with only one $2$-vertex (note that $d(w_1)>2$). If $B$ is a $6^+$-face, $u$ receives another $\frac{1}{4}$ from $B$. Otherwise, $B$ is a $5$-face with two $2$-vertices because $d(w_2)=2$. So $w_2$ is colored or can be recolored with $3$. As $x$ is $4$-saturated and non-recolorable, $x$ must be a $7s^+$-vertex and can give $u$ at least $\frac{1}{4}$ by (R1).

By (1), $A$ is an $F_{5a}$ or $F_{5d}$.  Therefore, $x$ is a $7s^+$-vertex with at least two $5s^+$-neighbors. Then $u$ receives at least $\frac{1}{4}$ from $x$ by (R1). If one of $B,C,D,E$ is a $6^+$-face, then $u$ can receive another $\frac{1}{4}$ by (R3). Therefore we may assume that $B, C, D,E$ are $5$-faces.

\item We may assume that $A$ is an $F_{5a}$.

For otherwise, by (1) and (4), $A$ must be $F_{5d}$. Then $d(x)= 7$, $u_2$ is a $6p$-vertex and $w_1$ is a $5s$-vertex.  So $w_1$ is the only $3$-saturated neighbor of the $6p$-vertex $u_2$. Since $x$ has at least two $5s^+$-neighbors, $u$ gets at least $\frac{1}{4}$ from $x$ by (R1). We may assume that $B$ is a $5$-face with two $2$-vertices, for otherwise, $u$ receives another $\frac{1}{4}$ from $B$. Now that $w_2$ is colored or can be recolored with $3$. Since $x$ is $4$-saturated, non-recolorable and $d(x)=7$, $w_1$ is the only $3$-saturated neighbor of $x$. Thus we can recolor $x, w_1, u_2, u$ with $3, 4, 3, 4$, respectively, then color $v_2$ with $4$ to obtain a $(3, 4)$-coloring of $G$, which is a contradiction.
\end{enumerate}

By (5), $u_2,w_1,x$ are $6p$-, $6s^+$-, $7s^+$-vertices, respectively. Clearly, $x$ gives at least $\frac{1}{4}$ to $u$ and the heavy edge $xw_1$. By (2) and (4), both $B$ and $D$ are $5$-faces with two $2$-vertices, for otherwise, $u$ can receive another $\frac{1}{4}$ from $B$ or $D$. Then both of $w_2,w_3$ are $2$-vertices. Also, $C$ is a $5$-face. Note that $w_2$ is colored or can be recolored with $3$ and both of $u_1,u_3$ are $4$-saturated and non-recolorable. So each of $u_1,u_3$ has either a $3$-saturated neighbor or at least four neighbors colored with $3$. This implies that $u_1$ is a $6s^+$- or $8^+$-vertex, and $u_3$ is either a $8^+$-vertex or a $7s^+$-vertex with another $5s^+$- or $9^+$-neighbor distinct from $u_1$. So $u_3$ gives at least $\frac{1}{4}$ to the heavy edge $u_1u_3$. Then $u$ receives $\frac{1}{4}+\frac{1}{2}\cdot(\frac{1}{4}+\frac{1}{4})=\frac{1}{2}$ from $x$, $A$ and $C$ in total.

\medskip

{\bf Case 2.} $x,y,u$ are in the same face, denoted by $A$ (see for example Figure~\ref{figure-5s} (II)). Let $w_1$ be the neighbor of $y$ on $E$ and $w_2$ be the neighbor of $x$ on $B$.

\begin{enumerate}
\item $u$ must be incident with a special $5$-face $F_{5a}, F_{5b}, F_{5c}$ or $F_{5d}$.

Assume that $u$ has none of the special 5-faces.  By (R3), each $6^+$-face or $5$-face with at most one $2$-vertex gives at least $\frac{1}{4}$ to $u$, in particular, $A$ gives $\frac{1}{4}$ to $u$.  So all other faces are $5$-faces with two $2$-vertices. This implies that $d(w_1)=d(w_2)=2$.

Recall that $x$ and $u_3$ are $4$-saturated. Then $w_2$ is colored or can be recolored with $3$. Note that $d(x)\le 6$, for otherwise, $u$ gets $\frac{1}{4}$ from $x$.  So $u,w_2$ are the only neighbors of $x$ of color $3$.  Now recolor $x$ with $3$ and $u$ with $4$, and we can color $v_2$ with $3$, a contradiction.





\item None of $C,D,B,E$ is a special $5$-face.

Clearly $C,D$ cannot be special $5$-faces.  If $B$ or $E$ is a special $5$-face, then they only could be in $\{F_{5a}, F_{5d}\}$.   By symmetry, assume that $B$ is a special 5-face. Then $u_3, w_2, x$ are $6p$-, $5s^+$- and $7s^+$-vertices, respectively.  Since $u_3$ is a $6p$-vertex, $u_3u_2\not\in E(G)$, so $C$ is a $6^+$-face, thus $u$ gets at least $\frac{1}{4}$ from $C$ by (R3c).  So $u$ gets at least $\frac{1}{2}$ since $u$ gets at least $\frac{1}{4}$ from $x$ by (R1), a contradiction.


\item $A$ cannot be a special $5$-face.

Clearly, $A$ cannot be a special $5$-face $F_{5a}$. So we may assume that $A$ is a special $5$-face in $\{F_{5b}, F_{5c}, F_{5d}\}$. So it contains a $7^+$-vertex which is $x$ or $y$.  By (R1), $u$ receives at least $\frac{1}{4}$ from the $7^+$-vertex.   We may assume that $B,E$ are $5$-faces with two $2$-vertices (for otherwise, $u$ receives at least $\frac{1}{4}$ from them). Then $d(w_1)=d(w_2)=2$.

Note that $w_2$ is colored or can be recolored with $3$, since both $u_3$ and $x$ are $4$-saturated. Since $u_3$ is non-recolorable, it must have a $3$-saturated neighbor and four neighbors of color $4$, so $u_3$ must be a $7^+$-vertex. Note that $u_2$ must be a $6r^+$-vertex or an $8^+$-vertex and $u_1$ be a $6s^+$-vertex or an $8^+$-vertex, since they are all $4$-saturated and non-recolorable.  By (R1), $u_2$ gives at least $\frac{6-4-3\cdot \frac{1}{2}}{3}=\frac{1}{6}$ to each of the heavy edges $u_2u_3$ and $u_2u_1$, and $u_3$ gives at least $\frac{7-4-5\cdot \frac{1}{2}}{2}=\frac{1}{4}$ to the heavy edge $u_2u_3$.  So by (R2) and (R3), $u$ gets at least $\frac{1}{2}(\frac{1}{4}+\frac{1}{6}\cdot  2)>\frac{1}{4}$ from $C$ and $D$. So $u$ gets at least $\frac{1}{2}$.
\end{enumerate}
Now (2) and (3) contradict (1).
\end{proof}

\begin{lem}
Every vertex $u\in V(G)$ has $\mu^*(u)\geq 0$.
\end{lem}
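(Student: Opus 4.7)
The plan is to verify case-by-case that every vertex type not already handled by the preceding three lemmas satisfies $\mu^*(u) \ge 0$. The remaining types to check are $2$-, $4$-, $5r$-, $6s$-, $6r$-, $7$-, $8$-, and $9^+$-vertices. Once every vertex has non-negative final charge, combining with Lemma~\ref{lem-face-charge} gives $0 \le \sum_{x \in V(G) \cup F(G)} \mu^*(x) = \sum_x \mu(x) = -8$, the desired contradiction completing the proof of Theorem~\ref{thm:main}.

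I would dispatch the small and intermediate cases first. A $2$-vertex has both neighbors being $5^+$-vertices by Lemma~\ref{lem:edge}, receiving $\tfrac{1}{2}$ from each via (R1); and since $G$ has girth at least $5$, both incident faces are $5^+$-faces and each sends $\tfrac{1}{2}$ via (R3), for a total of $-2 + 1 + 1 = 0$. A $4$-vertex is not involved in any rule. A $5r$-vertex has at most two $2$-neighbors (because it has at least three $3^+$-neighbors), so after paying $\tfrac{k}{2}\le 1$ via (R1) to its $k$ $2$-neighbors, the non-negative remainder is handed off by (R1c). A $6s$-vertex has exactly four $2$-neighbors and consumes its entire initial charge of $2$, while a $6r$-vertex has at most three $2$-neighbors and a surplus of at least $\tfrac{1}{2}$ that (R1c) distributes. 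For an $8$-vertex, each of the $8$ incident edges either goes to a recipient in $\{2,5p,5s,6p\}$, or to a $5^+$-neighbor not in $\{5p,5s,6p\}$ (making the edge heavy), or to a $4$-neighbor (nothing given); hence $u$ gives at most $8 \cdot \tfrac{1}{2} = 4 = \mu(u)$. A $9^+$-vertex of degree $d$ gives at most $\tfrac{d}{2}$ and thus retains at least $\tfrac{d}{2} - 4 > 0$.

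The main obstacle is the case $d(u) = 7$, where a naive count fails: a priori, $u$ might have to pay $\tfrac{1}{2}$ to each of its seven neighbors (if all are $2$-, $5p$-, or $6p$-vertices), exceeding the initial charge of $3$. The crucial observation I plan to invoke is Lemma~\ref{vertex-degree}(iii): every $7$-vertex has a saturated neighbor that is either a $9^+$-vertex or a $5s^+$-vertex, and no $2$-, $5p$-, or $6p$-vertex qualifies as either. Hence at most six of the seven neighbors of $u$ belong to $\{2\text{-vertex},\,5p,\,6p\}$, so (R1) together with the first clause of (R1b) give out at most $6 \cdot \tfrac{1}{2} = 3 = \mu(u)$, leaving a non-negative residual to be distributed to adjacent $5s$-vertices and incident heavy edges by (R1b). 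This completes the vertex-charge analysis and closes the argument.
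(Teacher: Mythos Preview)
Your argument is correct and follows the same approach as the paper: the paper's proof is in fact terser than yours (it simply writes ``For $d(u)\in\{5,6\}$, the lemmas have shown that $\mu^*(u)\ge 0$'' without separately spelling out the easy $5r$-, $6s$-, $6r$-cases as you do), and for $d(u)=7$ it invokes exactly the same observation---Lemma~\ref{vertex-degree} guarantees a neighbor that is a $9^+$- or $5s^+$-vertex, hence not a $2$-, $5p$-, or $6p$-vertex, so at most six neighbors receive the fixed $\tfrac12$ under (R1b).
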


\begin{proof}
We consider the cases according to the degree of $u$. Clearly, $\mu^*(u)=2-4+4\cdot\frac{1}{2}=0$ if $d(u)=2$ by the rules.  If $d(u)\ge 8$, then $\mu^*(u)\ge d(u)-4-d(u)\cdot \frac{1}{2}\ge 0$. If $d(u)=7$, then $u$ has at least one neighbor that is not a $2$-, $5p$-, or $6p$-vertex by Lemma~\ref{vertex-degree}. Thus by (R1b), $\mu^*(u)\ge 7-4- \frac{6}{2}\ge 0$
. For $d(u)\in \{5,6\}$, the lemmas have shown that $\mu^*(u)\ge 0$.  Note that $d(u)\not=3$ and $4$-vertices have initial and final charges $4-4=0$.
\end{proof}


\end{document}